\documentclass[preprint,10pt]{elsarticle}

\usepackage{amssymb}
\usepackage{amsfonts}
\usepackage{amsmath,xfrac}
\usepackage{mathtools}   %
\usepackage[T1]{fontenc}
\usepackage{lmodern}
\usepackage{graphicx,amsthm,amsmath,ipn_tools}
\usepackage[dvipsnames]{xcolor}
\usepackage{tikz}
\usepackage{pgfplots}
\usepackage[margin=3.0cm]{geometry}
\usepackage{hyperref}
\hypersetup{
colorlinks=true,
linkcolor=blue,
filecolor=magenta,
urlcolor=cyan,
citecolor=gray,
}

\newtheorem{theorem}{Theorem}[section]
\newtheorem{lemma}{Lemma}[section]
\newtheorem{proposition}{Proposition}[section]
\newtheorem{corollary}{Corollary}[section]
\newtheorem{definition}{Definition}[section]

\newtheorem{remark}{Remark}[section]

\pgfplotsset{width=7.5cm,compat=1.12}
\usetikzlibrary{patterns}

\definecolor{carmine}{rgb}{0.59, 0.0, 0.09}
\definecolor{burntorange}{rgb}{0.8, 0.33, 0.0}

\usepackage{pgfplotstable}

\journal{Computer Methods in Applied Mechanics and Engineering}

\newcommand{\fluxseminorm}[1]{\lvert #1\rvert_{h,\sigma}}
\begin{document}

\begin{frontmatter}

\newdefinition{asp}{Assumption}
\newdefinition{rmk}{Remark}
\newdefinition{example}{Example}

\title{A hybridization-derived interior penalty Nitsche method for heterogeneous and anisotropic diffusion}

\affiliation[reunion]{organization={University of La R\'eunion, ESIROI},
city={Saint-Pierre}, postcode={97455}, state={La Réunion}, country={France} }

\author[reunion]{Vincent Fontaine}
\ead{vincent.fontaine@univ-reunion.fr}

\begin{abstract}
We develop an interior penalty Nitsche finite element formulation for heterogeneous and anisotropic diffusion problems, which is derived from an interior-penalty stabilized Lagrange multiplier method. A distinctive feature of the derived method is that it simultaneously combines two popular stabilization strategies: (i) a boundary penalty mechanism similar to Nitsche's methods and (ii) an interior penalty mechanism as for Continuous Interior Penalty (CIP) methods. Furthermore, the normal component of the numerical flux is single-valued across the mesh skeleton and available in closed form.
We establish the equivalence between the hybridized Lagrange multiplier formulation and its condensed Interior Penalty Nitsche (IPN) form, and we deliver the stability and convergence analysis of the latter. For sufficiently regular solutions, we establish energy-norm error estimates for the three symmetry variants. Under a piecewise elliptic regularity assumption on the adjoint problem, the $\LL{2}$ estimate is optimal for the symmetric variant and exhibits a half-order loss for the incomplete and nonsymmetric variants; its constant may depend on the diffusion coefficients through the adjoint regularity constant. This single-valued numerical normal flux, available in closed form, provides a starting point for locally conservative $\boldsymbol{\mathrm{H}}(\mathrm{div})$-conforming flux reconstruction, left for future work.
Numerical experiments validate the theoretical findings. %
\end{abstract}

\begin{keyword}
Stabilized Lagrange multipliers\sep Nitsche's boundary penalty\sep interior penalization\sep flux continuity\sep stability analysis\sep \textit{a priori} error estimates

\end{keyword}

\end{frontmatter}

\section{Introduction}
\label{Intro}
Using Lagrange multipliers on the domain boundary to handle non-homogeneous Dirichlet conditions was initially introduced and analyzed by I. Babu\v{s}ka \cite{Babuska1973}. However, the polynomial approximation spaces used for the primal variable and Lagrange-multipliers cannot be chosen independently, and they have to fulfill the so-called Babu\v{s}ka--Brezzi (BB) condition to ensure stability and optimal convergence. The construction of such a stable pair of finite element spaces was studied in detail by Pitk\"aranta \cite{Pitkaranta1979}, indicating that it is a delicate question. Indeed, both variables are strongly coupled since the multiplier can be physically interpreted as the outward normal boundary flux. Barbosa and Hughes \cite{BarbosaHughes1991} explored an original way to circumvent this issue and restore stability, notably by adding residual stabilization terms into the original discrete bilinear form. Later, Stenberg \cite{Stenberg1995} established some close connections between the stabilized formulation of Barbosa--Hughes and the original symmetric Nitsche method \cite{Nitsche1971} (without any multipliers) using only boundary penalty mechanisms instead to weakly impose Dirichlet conditions. A non-symmetric version of the Nitsche method, coercive for any positive value of the penalty parameter, was subsequently proposed and analyzed by Freund and Stenberg \cite{FreundStenberg1995}.

Here, we explore a different route where the multiplier now acts as the trace of the primal variable on the whole mesh skeleton, not only on the boundary interfaces. This complete hybridization is considered here to enforce additional regularity requirements by construction, such as flux continuity. The choice of approximation spaces becomes natural, since the stabilization removes the need for a delicate inf-sup compatible pairing. The stability of the fully hybridized Lagrange multiplier method is ensured by adding consistent terms that penalize the difference between the discrete primal variable and its trace on the mesh skeleton. The idea of blending a hybridized Lagrange-multiplier formulation with a penalty stabilization can be traced back to the penalty-hybrid finite element method of Fix, Liang and Lee \cite{FixLiangLee1982} for the Laplace operator, an early precursor of the hybridized interior penalty (H-IP) framework~\cite{EtangsaleFRWA2021,EtangsaleCAMWA2022}. These constructions can be viewed in the general setting of the earlier unified hybridization theory of Cockburn, Gopalakrishnan and Lazarov \cite{CGLSIAM09}, which encompasses discontinuous Galerkin, mixed and continuous Galerkin methods within a single setting, building on the unified analysis of discontinuous Galerkin methods of Arnold, Brezzi, Cockburn and Marini \cite{ABCMUnified}.

Within this framework, the embedded discontinuous Galerkin (EDG) method of Cockburn \al \cite{CockburnEDG2009} keeps the primal variable discontinuous but constrains the skeletal trace to be continuous, thereby reducing the global trace system to a continuous Galerkin stencil; the present IPN method adopts the transposed configuration, with a continuous ($H^1$-conforming) primal variable and a discontinuous, face-wise broken skeletal trace. A complementary contrast is offered by the Hybrid High-Order (HHO) setting, where Cascavita, Chouly and Ern~\cite{CascavitaChoulyErn2020} combined a hybrid skeletal discretization with Nitsche's method to impose Dirichlet and Signorini conditions weakly. There, the cell--face stabilization governs the bulk discretization, whereas the Nitsche terms act on the boundary, and static condensation proceeds in the canonical direction by eliminating the cell unknowns and retaining a global skeletal system. The present construction performs the complementary reduction: the skeletal trace is eliminated in closed form, the global problem is carried by the $H^1$-conforming primal field, and the boundary and interior penalty mechanisms are jointly inherited from the same reduced hybrid structure.

A practical advantage of these Lagrange multiplier methods is that they are eligible for static condensation, which mitigates the cost of the additional skeletal unknowns. However, the Schur complement strategy presents an additional computational complexity since it requires a set of local operations on the mesh skeleton, such as matrix assembly and inversion. An alternative way consists of algebraically removing multipliers from the set of discrete equations, in the spirit of the elimination connecting stabilized multiplier and Nitsche formulations established by Stenberg \cite{Stenberg1995}. This elimination is performed here by observing that discrete numerical traces are expressed solely in terms of the primal variable. Thus, the resulting $H^1$-conforming finite element method combines (i) a boundary penalty mechanism to impose Dirichlet conditions weakly as for Nitsche's methods \cite{Nitsche1971} and (ii) an interior penalty mechanism enforcing the weak continuity of the normal component of the numerical flux across the mesh skeleton---a skeletal $\boldsymbol{\mathrm{H}}(\mathrm{div})$-compatibility condition---as for CIP methods \cite{DouglasCIP1976}. We call it the Interior Penalty Nitsche (IPN) method.

The present construction is the $H^1$-conforming counterpart of the broken UIP-DG formulation~\cite{EtangsaleUIPDG2026}. Conformity removes the interior mean weights, leaving the transmissibility-dependent scalar $\varrho$ in the flux-jump penalty. With strongly imposed homogeneous Dirichlet conditions, the reduced family contains CG, positively stabilized and sign-reversed flux-jump variants; the coercivity threshold relaxes, and all three variants attain the optimal $\LL{2}$ estimate under the stated piecewise adjoint regularity assumption. Sections~\ref{S:3} and~\ref{S:4} distinguish these properties from those of the formulation with Nitsche boundary terms.

Related primal interface formulations were derived from stabilized multipliers through a variational multiscale argument by Truster and Masud~\cite{TrusterMasud2014}, and extended to finite strains by Truster, Chen and Masud~\cite{TrusterChenMasud2015}. Their interface constructions provide relevant precedents for combining weighted fluxes, primal jumps and flux-jump terms. Here, global $H^1$-conformity removes interior primal jumps, while face-local elimination of the skeletal trace retains a transmissibility-weighted normal-flux-jump term. The contribution is the resulting conforming three-variant family, its explicit numerical traces, and the stability and error analysis of the interior mechanism together with the weak boundary treatment.

The contributions of this work are fourfold: (i) the construction of an $H^1$-conforming interior penalty Nitsche method by closed-form, face-local elimination of the skeletal trace from an interior-penalty stabilized hybrid formulation; (ii) the resulting single-valued skeletal normal flux, available in closed form; (iii) a complete stability and \textit{a priori} error analysis, in the energy and $\LL{2}$ norms, for the three symmetry variants, together with the conforming CG / positively stabilized / sign-reversed trichotomy; and (iv) a numerical study of the contrast robustness and of the role of the transmissibility weighting. The embedding of continuous Galerkin methods into hybridization frameworks is classical~\cite{CGLSIAM09}; what is distinctive in (i) is that the explicit trace elimination generates a one-parameter primal family carrying a transmissibility-weighted flux-jump stabilization.

The rest of the material is organized as follows. Section \ref{S:2} introduces the discrete notations and the trace operators. Section \ref{S:3} constructs the IPN method: it introduces an auxiliary hybridizable Nitsche formulation, eliminates the skeletal unknown in closed form, and characterizes the resulting numerical traces. Section \ref{S:4} carries out the stability and \textit{a priori} error analysis of the IPN method, delivering well-posedness and error estimates of the scalar variable in the energy- and $\LL{2}$-norms. Section \ref{S:5} reports numerical experiments assessing convergence, contrast sensitivity and the role of the penalty weighting. We briefly end with some concluding remarks and perspectives, including the locally conservative $\boldsymbol{\mathrm{H}}(\mathrm{div})$-flux correction. The extension to linear elasticity is developed in a separate companion manuscript.

\section{Preliminaries}
\label{S:2}

\subsection{Model problem}

Let $\domain\subset\R^d$ ($d\in\{2,3\}$) be a bounded polyhedral domain with Lipschitz boundary $\bnddom$. We consider the heterogeneous, anisotropic diffusion problem with homogeneous Dirichlet boundary conditions,
\begin{equation}
\label{model_problem}
-\dvg{(\bkap\grd{u})}=f\quad\textrm{in }\domain,\qquad u=0\quad\textrm{on }\bnddom,
\end{equation}
where $f\in\lebesgue{2}{\domain}$ is a source term and $\bkap:\domain\to\R^{d\times d}$ is a symmetric diffusion tensor, piecewise constant with respect to the mesh introduced below and possibly discontinuous across material interfaces. We assume that there exist constants $0<\kappa_{\min}\le\kappa_{\max}<\infty$ such that
\begin{equation}
\label{ellipticity}
\kappa_{\min}\abs{\xi}^2\le\xi^{\top}\bkap(x)\,\xi\le\kappa_{\max}\abs{\xi}^2,\qquad\forall\xi\in\R^d,\quad\textrm{for a.e. }x\in\domain.
\end{equation}
The ratio $\kappa_{\max}/\kappa_{\min}$, referred to as the \emph{contrast ratio}, may be arbitrarily large; the robustness of the analysis with respect to this regime is one of the motivations of the present work. By the Lax--Milgram lemma, problem~\eqref{model_problem} admits a unique weak solution $u\in V\eqbydef\hilbert{1}{\domain}{0}$ such that
\begin{equation}
    \label{weak_form}
    \dbilin{a}{u}{v}\eqbydef\inerprod{\bkap\grd u}{\grd v}{0}{\domain}=\inerprod{f}{v}{0}{\domain}=:\dlin{l}{v},\qquad\forall v\in V,
\end{equation}
where $\inerprod{\cdot}{\cdot}{0}{\domain}$ denotes the $\lebesgue{2}{\domain}$ inner product. In the context of Darcy flows, $u$ represents the potential and $\sg\eqbydef-\bkap\grd{u}$ the diffusive flux.

\subsection{Discrete setting}

\paragraph{Mesh and skeleton}
Let $\{\Th\}_{h>0}$ be a shape-regular family of conforming affine simplicial meshes of $\domain$, with global meshsize $h\eqbydef\max_{\e{}\in\Th}h_{\e{}}$ and local diameter $h_{\e{}}\eqbydef\diam{\e{}}$; we assume $h\le 1$ without loss of generality. For $d=2$ (resp.\ $d=3$), an \emph{interface} is an edge (resp.\ a face). The mesh skeleton is partitioned as $\Fh\eqbydef\Fhi\cup\Fhb$, where $\Fhi$ and $\Fhb$ collect the interior and boundary interfaces, respectively. For each element $\e{}\in\Th$, we denote by $\FE$ the set of its interfaces, by $\vect{n}_{\e{},\f{}}$ the unit outward normal to $\f{}\in\FE$, and we set
\[
\bnd{\Th}\eqbydef\bigcup_{\e{}\in\Th}\FE,
\qquad
\eta_{\e{}}\eqbydef\card{\FE},
\qquad
\eta_0\eqbydef\max_{\e{}\in\Th}\eta_{\e{}}.
\]
Note that $\bnd{\Th}$ is a multiset, each interior interface being counted twice (once from each side), whereas $\Fh$ counts each interface once. For a measurable set $X$, $\abs{X}$ denotes its $d$- or $(d-1)$-dimensional Lebesgue measure.

\paragraph{Functional spaces}
For a polyhedral domain $D\subset\R^d$, $\hilbert{s}{D}{}$ is the usual Sobolev space with norm $\norm{\cdot}{s}{D}$ and seminorm $\seminorm{\cdot}{s}{D}$; for $s=0$, $\hilbert{0}{D}{}=\lebesgue{2}{D}$ with inner product $\inerprod{\cdot}{\cdot}{0}{D}$, and on boundaries we write $\dualprod{\cdot}{\cdot}{0}{\bnd{D}}$ for the $\lebesgue{2}{\bnd{D}}$ inner product. The \emph{broken Sobolev space} over $\Th$ is
\[
\hilbert{s}{\Th}{}\eqbydef\{v\in\lebesgue{2}{\domain}:v\vert_{\e{}}\in\hilbert{s}{\e{}}{}\ \forall\e{}\in\Th\},
\]
equipped with the broken norm $\norm{\cdot}{s}{\Th}$ and the broken gradient $\grdh{}$ acting on $\hilbert{s}{\Th}{}$, $s\ge 1$. To handle interface integrals uniformly, we use the aggregated $\LL{2}$-inner products
\[
\dualprod{\cdot}{\cdot}{0}{\bnd{\Th}}\eqbydef\sum_{\e{}\in\Th}\sum_{\f{}\in\FE}\dualprod{\cdot}{\cdot}{0}{\f{}},
\qquad
\dualprod{\cdot}{\cdot}{0}{\Fh}\eqbydef\sum_{\f{}\in\Fh}\dualprod{\cdot}{\cdot}{0}{\f{}},
\]
with corresponding norms $\norm{\cdot}{0}{\bnd{\Th}}$ and $\norm{\cdot}{0}{\Fh}$. The diffusion tensor is piecewise constant, $\bkap\in[\Pk{0}{\Th}]^{d\times d}$; on each $\e{}\in\Th$, the largest and smallest eigenvalues of $\bkap_{\e{}}\eqbydef\bkap\vert_{\e{}}$ are denoted by $\overline{\kappa}_{\e{}}$ and $\underline{\kappa}_{\e{}}$, and we set $\overline{\kappa}_{0}\eqbydef\max_{\e{}\in\Th}\overline{\kappa}_{\e{}}$ and $\underline{\kappa}_{0}\eqbydef\min_{\e{}\in\Th}\underline{\kappa}_{\e{}}$.

\paragraph{Polynomial spaces and composite space}
Let $k\ge 1$, and let $\Pk{k}{X}$ denote the space of polynomials of total degree at most $k$ on a generic element or interface $X$. In contrast to the broken discontinuous Galerkin setting of~\cite{EtangsaleUIPDG2026}, the primal variable is here sought in the \emph{$H^1$-conforming} space
\begin{equation}
\label{spaces}
\Vh\eqbydef\{\dv\in\hilbert{1}{\domain}{}:\dv\vert_{\e{}}\in\Pk{k}{\e{}},\ \forall\e{}\in\Th\},
\end{equation}
whereas the Lagrange multiplier lives in the single-valued, face-wise broken trace space $\TVh{}\eqbydef\{\dtv\in\lebesgue{2}{\Fh}:\dtv\vert_{\f{}}\in\Pk{k}{\f{}},\ \forall\f{}\in\Fh\}$. The homogeneous Dirichlet condition is enforced at the skeletal level through $\TVh{0}\eqbydef\{\dtv\in\TVh{}:\dtv\vert_{\f{}}=0,\ \forall\f{}\in\Fhb\}$. We finally introduce the \emph{composite space} $\CVh{0}\eqbydef\Vh\times\TVh{0}$, whose generic element is a pair $\cdv\eqbydef(\dv,\dtv)$.

\subsection{Discrete trace operators}
\label{S:2.3}

To bridge the hybridized and primal formulations, we introduce three trace operators together with a \emph{skeletal-weighting function} $\vomega:\Fh\to\R^2$. For each interior interface $\f{}=\bnd{\e{1}}\cap\bnd{\e{2}}\in\Fhi$, we set $\vomega\vert_{\f{}}\eqbydef(\omega_1,\omega_2)$ with $\omega_i\eqbydef\omega\vert_{\e{i},\f{}}\ge 0$ and $\omega_1+\omega_2=1$ a.e.\ on $\f{}$; on $\f{}\in\Fhb$ we set $(\omega_1,\omega_2)\eqbydef(1,0)$.

\begin{definition}[Discrete trace operators]
\label{def_traces_operators}
Let $s\ge 1$. For an interior interface $\f{}=\bnd{\e{1}}\cap\bnd{\e{2}}\in\Fhi$, set $\vect{n}_i\eqbydef\vect{n}_{\e{i},\f{}}$, and for $\varphi\in\hilbert{s}{\Th}{}$ (resp.\ $\boldsymbol{\varphi}\in[\hilbert{s}{\Th}{}]^d$) write $\varphi_i\eqbydef\varphi\vert_{\e{i},\f{}}$ (resp.\ $\boldsymbol{\varphi}_i\eqbydef\boldsymbol{\varphi}\vert_{\e{i},\f{}}$).
\begin{itemize}
  \item \textbf{DG jump.} The normal jump $\jump{\cdot}$ acts on scalar and vector fields by
  \begin{equation}
    \label{sub:wjump}
    \jump{\varphi}\vert_{\f{}}\eqbydef
    \begin{cases}\varphi_1\vect{n}_1+\varphi_2\vect{n}_2 & \f{}\in\Fhi,\\ \varphi\vert_{\e{},\f{}}\,\vect{n}_{\e{},\f{}} & \f{}\in\Fhb,\end{cases}
    \qquad
    \jump{\boldsymbol{\varphi}}\vert_{\f{}}\eqbydef
    \begin{cases}\boldsymbol{\varphi}_1\cdot\vect{n}_1+\boldsymbol{\varphi}_2\cdot\vect{n}_2 & \f{}\in\Fhi,\\ \boldsymbol{\varphi}\vert_{\e{},\f{}}\cdot\vect{n}_{\e{},\f{}} & \f{}\in\Fhb,\end{cases}
  \end{equation}
  so that $\jump{\varphi}$ is vector-valued and $\jump{\boldsymbol{\varphi}}$ is scalar-valued.
  \item \textbf{Weighted DG mean.} Given $\vomega$, the weighted mean and its conjugate are
  \begin{equation}
    \label{sub:wmean}
    \wmean{\boldsymbol{\varphi}}{\vomega}\vert_{\f{}}\eqbydef
    \begin{cases}\omega_1\boldsymbol{\varphi}_1+\omega_2\boldsymbol{\varphi}_2 & \f{}\in\Fhi,\\ \boldsymbol{\varphi}\vert_{\e{},\f{}} & \f{}\in\Fhb,\end{cases}
    \qquad
    \cwmean{\boldsymbol{\varphi}}{\vomega}\vert_{\f{}}\eqbydef
    \begin{cases}\omega_2\boldsymbol{\varphi}_1+\omega_1\boldsymbol{\varphi}_2 & \f{}\in\Fhi,\\ \boldsymbol{\varphi}\vert_{\e{},\f{}} & \f{}\in\Fhb,\end{cases}
  \end{equation}
  and likewise for scalar fields; the choice $\omega_1=\omega_2=\shalf$ recovers the arithmetic mean $\mean{\cdot}$.
  \item \textbf{HDG local jump.} The HDG jump $\tjump{\cdot}:\CVh{0}\to[\lebesgue{2}{\bnd{\Th}}]^d$ measures, elementwise, the discrepancy between the primal field and its skeletal trace,
  \begin{equation}
    \label{jumpoperator}
    \tjump{\cdv}\vert_{\e{},\f{}}\eqbydef(\dv-\dtv)\vert_{\f{}}\,\vect{n}_{\e{},\f{}},
    \qquad\forall\e{}\in\Th,\ \forall\f{}\in\FE.
  \end{equation}
\end{itemize}
\end{definition}
Because $\dv\in\Vh\subset\hilbert{1}{\domain}{}$ has a single-valued trace and $\dtv$ is single-valued on $\Fh$, the scalar quantity $(\dv-\dtv)\vert_{\f{}}$ is single-valued across each interface; in particular,
\begin{equation}
\label{conforming_jump}
\jump{\dv}=\vect{0}\quad\textrm{on}\quad\Fhi,\qquad\forall\dv\in\Vh.
\end{equation}
This is a direct consequence of the $H^1$-conformity. It reduces the general boundary-to-skeleton identity of~\cite{EtangsaleUIPDG2026} to the following conforming form.
\begin{lemma}[Conforming identity relation]
\label{lemma:identity_rel}
Let $\vect{b}\in[\hilbert{1}{\Th}{}]^d$ and $\cdv=(\dv,\dtv)\in\CVh{0}$. Then
\begin{equation}
\label{magic3}
\dualprod{\vect{b}}{\tjump{\cdv}}{0}{\bnd{\Th}}
=\dualprod{\vect{b}\cdot\vect{n}}{\dv}{0}{\Fhb}
+\dualprod{\jump{\vect{b}}}{\dv-\dtv}{0}{\Fhi}.
\end{equation}
\end{lemma}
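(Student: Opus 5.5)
The identity follows by expanding the left-hand side of \eqref{magic3} from the definition of $\dualprod{\cdot}{\cdot}{0}{\bnd{\Th}}$ and then regrouping the sum interface by interface. Using \eqref{jumpoperator}, we first write
\[
\dualprod{\vect{b}}{\tjump{\cdv}}{0}{\bnd{\Th}}=\sum_{\e{}\in\Th}\sum_{\f{}\in\FE}\dualprod{\vect{b}\vert_{\e{}}\cdot\vect{n}_{\e{},\f{}}}{(\dv-\dtv)\vert_{\f{}}}{0}{\f{}}.
\]
Since $\bnd{\Th}$ is a multiset in which every interior interface appears exactly twice and every boundary interface exactly once, we exchange the order of summation and obtain one contribution per $\f{}\in\Fhb$ and two contributions, one from each of $\e{1},\e{2}$, per $\f{}\in\Fhi$. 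Here $\vect{b}\in[\hilbert{1}{\Th}{}]^d$ guarantees that the one-sided traces $\vect{b}_i\cdot\vect{n}_i$ lie in $\lebesgue{2}{\f{}}$, so each integral is well defined.

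On a boundary interface $\f{}\in\Fhb$, we have $\dtv\vert_{\f{}}=0$ because $\dtv\in\TVh{0}$. The contribution therefore reduces to $\dualprod{\vect{b}\cdot\vect{n}}{\dv}{0}{\f{}}$, and summing over $\Fhb$ gives the first term of \eqref{magic3}.

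On an interior interface $\f{}=\bnd{\e{1}}\cap\bnd{\e{2}}$, the key point is that the scalar factor $(\dv-\dtv)\vert_{\f{}}$ is the same from both sides. Indeed, $\dv\in\Vh\subset\hilbert{1}{\domain}{}$ has a single-valued trace, as recorded in \eqref{conforming_jump}, and $\dtv$ is single-valued on $\Fh$. The two contributions can therefore be combined:
\[
\dualprod{\vect{b}_1\cdot\vect{n}_1}{\dv-\dtv}{0}{\f{}}+\dualprod{\vect{b}_2\cdot\vect{n}_2}{\dv-\dtv}{0}{\f{}}=\dualprod{\jump{\vect{b}}}{\dv-\dtv}{0}{\f{}},
\]
by the definition \eqref{sub:wjump} of the scalar jump of a vector field. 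Summing over $\Fhi$ gives the second term.

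No step poses a real obstacle. The only point that needs care is the bookkeeping of the multiset $\bnd{\Th}$ versus the set $\Fh$, together with the single-valuedness of $\dv-\dtv$. This is exactly where $H^1$-conformity is used: in the broken case, one would need the weighted-mean decomposition of the general identity in~\cite{EtangsaleUIPDG2026} instead.
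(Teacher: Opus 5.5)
Your proposal is correct and follows the paper's proof essentially step for step. You expand the multiset sum over $\partial\mathcal{T}_h$, use $\hat v_h=0$ on boundary faces, and on each interior face factor out the single-valued scalar $(v_h-\hat v_h)|_F$, so that the two one-sided contributions combine into the scalar normal jump of $\boldsymbol{b}$.
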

\begin{proof}
Expanding the multiset sum over $\bnd{\Th}$ and grouping the two contributions of each interior interface, we obtain
\begin{equation*}
\dualprod{\vect{b}}{\tjump{\cdv}}{0}{\bnd{\Th}}
=\sum_{\f{}\in\Fhi}\dualprod{\vect{b}_1\cdot\vect{n}_1+\vect{b}_2\cdot\vect{n}_2}{\dv-\dtv}{0}{\f{}}
+\sum_{\f{}\in\Fhb}\dualprod{\vect{b}\cdot\vect{n}}{\dv-\dtv}{0}{\f{}},
\end{equation*}
where the scalar factor $(\dv-\dtv)\vert_{\f{}}$ is single-valued on every $\f{}\in\Fhi$ ($\dv$ being $H^1$-conforming and $\dtv$ single-valued), so that it factors out of the two-sided sum, whose vector part is precisely $\jump{\vect{b}}$. On $\f{}\in\Fhb$, $\dtv=0$ since $\dtv\in\TVh{0}$. This is~\eqref{magic3}. Since the primal trace is single-valued, every admissible weighted average coincides with that trace on interior faces.
\end{proof}

\section{The interior penalty Nitsche method}
\label{S:3}

This section constructs the interior penalty Nitsche (IPN) method and characterizes its numerical traces. The construction proceeds through an auxiliary hybridizable Nitsche formulation whose skeletal unknown is statically condensed, under the sole assumption that $\tau_{\e{},\f{}}$ is a strictly positive constant on every element--face pair, into the primal IPN method posed on $\Vh$. From the same condensation, the trace of the primal variable and the numerical flux are recovered in closed form, as a local post-processing of the IPN solution $\du$; the resulting numerical normal flux is single-valued across the skeleton.

\subsection{The hybridizable Nitsche formulation}

This auxiliary formulation is built from the conservative structure of the model problem~\eqref{model_problem}. Multiplying it by a test function $\dv\in\Vh$, integrating by parts over each element $\e{}\in\Th$ and summing over the mesh, the exact solution satisfies the global balance
    \begin{equation}
        \label{globweak}
        -\inerprod{\sg}{\grdh\dv}{0}{\Th}+\dualprod{\tsg\cdot\vect{n}}{\dv}{0}{\bnd{\Th}}=\inerprod{f}{\dv}{0}{\Th},
    \end{equation}
    where $\sg\eqbydef-\bkap\grd{u}$ is the exact diffusive flux and $\tsg$ its trace on the mesh skeleton; as $\sg\in\boldsymbol{H}(\mathrm{div},\domain)$, its normal trace is single-valued, so that $\tsg\cdot\vect{n}=\sg\cdot\vect{n}$ across interfaces. Since $\dtv\in\TVh{0}$ vanishes on $\Fhb$, we enforce the weak flux-transmission requirement
    \begin{equation}
        \label{transcond}
    	\dualprod{\tsg\cdot\vect{n}}{\dtv}{0}{\bnd{\Th{}}}=0,
    \end{equation}
    which expresses the weak continuity of the normal flux $\tsg\cdot\vect{n}$ across $\Fhi$. Subtracting \eqref{transcond} from \eqref{globweak} leads to the continuous hybridized form
    \begin{equation}
    \label{finprob}
      -\inerprod{\sg}{\grdh\dv}{0}{\Th{}}+\dualprod{\tsg}{\tjump{\cdv}}{0}{\bnd{\Th{}}}=\inerprod{f}{\dv}{0}{\Th{}}.
    \end{equation}
    We now pass to the discrete level by substituting, in the continuous hybridized form~\eqref{finprob}, the exact triple $(u,\sg,\tsg)$ by its discrete counterpart $(\cdu,\dsg(\du),\dtsg(\cdu))$, where $\dsg(\du)\eqbydef-\bkap\grdh\du$ is the native diffusive flux on $\Th$ and $\dtsg(\cdu)$ its numerical trace on $\bnd{\Th}$. In contrast to the exact flux, the native discrete flux $\dsg(\du)$ is double-valued on $\Fhi$, since $\du\in\Vh$ is merely $H^1$-conforming and $\bkap$ may jump across interfaces, so $\dsg$ and $\dtsg$ no longer coincide and must be defined separately. The numerical trace is closed by the transmissibility-stabilized ansatz
    \begin{equation}
    \label{tracesg}        \dtsg(\cdu)\eqbydef\dsg(\du)+\tau\tjump{\cdu}\quad\textrm{on}\quad\bnd{\Th{}},
    \end{equation}
    where, at this stage, $\tau$ is only assumed to be double-valued on $\bnd{\Th}$ and, on every element--face pair $(\e{},\f{})$ with $\f{}\in\FE$, to reduce to a strictly positive constant $\tau_{\e{},\f{}}$. Positivity makes the skeletal equations locally solvable; the facewise constancy is what turns their solution into a pointwise closed form, as Lemma~\ref{lem:local_solvability} makes precise. Both are satisfied by the transmissibility-based expression~\eqref{penalty_term} specified later for the stability analysis, $\bkap_{\e{}}$, $\vect{n}_{\e{},\f{}}$, $\abs{\f{}}$ and $\abs{\e{}}$ being constant.
\begin{definition}[Hybridizable Nitsche method]
\label{def:HNG}
The auxiliary hybridizable Nitsche formulation reads: find $\cdu\eqbydef(\du,\dtu)\in\CVh{0}$ such that
\begin{equation}
\label{HN_discrete_problem}
\dbilin{\boldsymbol{a}_h}{\cdu}{\cdv} = \dlin{\boldsymbol{l}_h}{\cdv}\eqbydef\inerprod{f}{\dv}{0}{\Th},\qquad\forall\cdv\eqbydef(\dv,\dtv)\in\CVh{0},
\end{equation}
where the bilinear form, obtained by adding to~\eqref{finprob} a symmetrizing term weighted by $\epsilon\in\{0,\pm1\}$, reads
\begin{equation}
    \label{bilinear-HN}
    \dbilin{\boldsymbol{a}_h}{\cdu}{\cdv}\eqbydef\inerprod{-\dsg(\du)}{\grdh\dv}{0}{\Th{}}+\dualprod{\dtsg(\cdu)}{\tjump{\cdv}}{0}{\bnd{\Th{}}}+\epsilon\dualprod{\dsg(\dv)}{\tjump{\cdu}}{0}{\bnd{\Th{}}}.
\end{equation}
The last term is an adjoint-consistency term that controls the symmetry of the bilinear form, as in classical interior penalty discontinuous Galerkin methods; accordingly, $\boldsymbol{a}_h$ is symmetric if $\epsilon=1$ and non-symmetric otherwise.
\end{definition}

The next result shows that the skeletal unknown is determined by the primal one, and identifies what each of the two assumptions on $\tau$ contributes.
\begin{lemma}[Local solvability and closed-form elimination]
\label{lem:local_solvability}
Let $\tau>0$. Then, for every $\du\in\Vh$, the skeletal component $\dtu\in\TVh{0}$ is uniquely determined by \eqref{HN_discrete_problem}, through independent interface-local problems. If, in addition, $\tau_{\e{},\f{}}$ is constant on every element--face pair, then $\dtu$ is given in closed form, pointwise on each interface, in terms of $\du$. The elimination of $\dtu$ is therefore performed analytically, without assembling or inverting any skeletal system.
\end{lemma}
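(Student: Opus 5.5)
The plan is to isolate the skeletal equations by testing \eqref{HN_discrete_problem} with pairs $\cdv=(0,\dtv)$, $\dtv\in\TVh{0}$. Since $\tjump{(0,\dtv)}\vert_{\e{},\f{}}=-\dtv\,\vect{n}_{\e{},\f{}}$, the volume term and the right-hand side drop out. The symmetrizing term $\epsilon\dualprod{\dsg(0)}{\tjump{\cdu}}{0}{\bnd{\Th}}$ also vanishes. What remains is
\[
-\dualprod{\dsg(\du)\cdot\vect{n}+\tau(\du-\dtu)}{\dtv}{0}{\bnd{\Th}}=0,\qquad\forall\dtv\in\TVh{0}.
\]
Because $\dtv$ is supported facewise and vanishes on $\Fhb$, choosing $\dtv$ supported on a single interior face $\f{}=\bnd{\e{1}}\cap\bnd{\e{2}}$ decouples the system into independent face problems: find $\dtu\vert_{\f{}}\in\Pk{k}{\f{}}$ such that
\[
\dualprod{(\tau_1+\tau_2)\,\dtu}{\dtv}{0}{\f{}}=\dualprod{(\tau_1+\tau_2)\,\du+\jump{\dsg(\du)}}{\dtv}{0}{\f{}},\qquad\forall\dtv\in\Pk{k}{\f{}}.
\]
Here $\tau_i=\tau\vert_{\e{i},\f{}}$, and I use the single-valuedness of $\du$ on $\f{}$, which follows from \eqref{conforming_jump}. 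On boundary faces $\dtu=0$ is imposed by the space.

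For unique solvability with general $\tau>0$, I will observe that the left-hand side is a weighted $\LL{2}(\f{})$ inner product on the finite-dimensional space $\Pk{k}{\f{}}$. Its weight $\tau_1+\tau_2$ is positive, and I assume it is bounded, as is needed for the forms to make sense. Hence the Gram matrix is symmetric positive definite, and each face problem has a unique solution, namely the weighted $\LL{2}$ projection. This gives existence and uniqueness of $\dtu$ given $\du$, face by face and with no coupling between faces.

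For the closed form, I assume $\tau_{\e{},\f{}}$ is constant. Then the weight $\tau_1+\tau_2$ is a positive constant, and the face problem becomes the standard $\LL{2}(\f{})$ projection $\Pi_{\f{}}^k$ onto $\Pk{k}{\f{}}$. The key observation is that the data already lie in $\Pk{k}{\f{}}$. Indeed, $\du\vert_{\f{}}\in\Pk{k}{\f{}}$, and since $\bkap$ is piecewise constant and the meshes are affine, $\dsg(\du)\cdot\vect{n}_i=-\bkap_{\e{i}}\grd\du\vert_{\e{i}}\cdot\vect{n}_i\in\Pk{k-1}{\f{}}$. The projection is therefore the identity, which yields the pointwise formula
\[
\dtu\vert_{\f{}}=\du\vert_{\f{}}+\frac{\jump{\dsg(\du)}}{\tau_1+\tau_2}=\du\vert_{\f{}}-\frac{\jump{\bkap\grdh\du}}{\tau_1+\tau_2}.
\]
This can equivalently be written with weights $\omega_i=\tau_i/(\tau_1+\tau_2)$ if desired. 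The last assertion of the lemma follows, since no skeletal system needs to be assembled or inverted.

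The main point needing care is this last step: checking that constancy of $\tau$ (together with piecewise constant $\bkap$ and affine elements) makes the projection trivial. Without constancy, $\tau\du$ is not polynomial of degree $k$, so one only obtains a weighted projection, which explains why the lemma separates the two assumptions.
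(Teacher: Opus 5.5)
Your proposal is correct and follows essentially the same route as the paper: you test with $(0,\dtv)$, decouple facewise into $\tau$-weighted mass problems that are symmetric positive definite for $\tau>0$, and, for facewise constant $\tau$, observe that the data divided by $\tau_1+\tau_2$ already lie in $\Pk{k}{\f{}}$, so the projection is the identity and yields the pointwise formula. One small precision on your closing remark: for non-constant $\tau$ the weighted projection still reproduces $\du\vert_{\f{}}$ exactly, so the obstruction to a closed form is that $\jump{\dsg(\du)}/(\tau_1+\tau_2)$ is not a face polynomial, not that $\tau\du$ fails to be one.
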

\begin{proof}
Selecting $\cdv\eqbydef(0,\dtv)$ in \eqref{HN_discrete_problem} gives $\dualprod{\dtsg(\cdu)\cdot\vect{n}}{\dtv}{0}{\bnd{\Th}}=0$ for all $\dtv\in\TVh{0}$. Substituting the flux ansatz \eqref{tracesg} yields
\[
\dualprod{\tau\,\dtu}{\dtv}{0}{\bnd{\Th}}=\dualprod{\dsg(\du)\cdot\vect{n}+\tau\,\du}{\dtv}{0}{\bnd{\Th}},\qquad\forall\dtv\in\TVh{0}.
\]
Since $\TVh{0}$ is broken face by face, this relation decouples into independent interface problems, each governed on $\f{}$ by the form $\dualprod{\tau\,\cdot}{\cdot}{0}{\f{}}$, which is symmetric positive definite on $\Pk{k}{\f{}}$ as soon as $\tau>0$; each is therefore uniquely solvable, and so is $\dtu$. Positivity alone, however, only yields $\dtu$ as the solution of a $\tau$-weighted mass problem: the quotient of the right-hand side by $\tau_1+\tau_2$ need not belong to $\Pk{k}{\f{}}$. When $\tau_{\e{},\f{}}$ is constant on every element--face pair, that quotient is a face polynomial and the interface problems are solved pointwise, which gives the explicit formula of Lemma~\ref{lemma_Trace} below and proves the claim.
\end{proof}
The closed form characterizes the numerical traces $\dtu$ and $\dtsg$ on $\Fh$ solely in terms of $\du$.

\subsection{Inherited numerical traces}

By the closed-form elimination of Lemma~\ref{lem:local_solvability}, the skeletal quantities $\dtu$ and $\dtsg$ are determined explicitly by the primal variable $\du$ alone. These relations both drive the reduction leading to the IPN method (Section~\ref{S:3.3}) and provide, once $\du$ is known, a local recovery on the skeleton of the numerical flux $\dtsg$, whose normal component is single-valued.
\begin{lemma}%
\label{lemma_Trace}
Let $\tau_{\e{},\f{}}$ be a strictly positive constant on every element--face pair and let $\cdu\eqbydef(\du,\dtu)\in\CVh{0}$ be a solution of the discrete problem \eqref{HN_discrete_problem}. Then, the flux-transmission requirement \eqref{transcond} implies that $(\dtu,\dtsg\cdot\vect{n})$ are single-valued on the mesh skeleton, \ie
\begin{subequations}
\label{traces_def}
\begin{align}
    \label{traces_def_u}
    \dtu&=\begin{cases}
    \du+\varrho\jump{-\bkap\grdh\du}{} & \textrm{on}\quad\Fhi,\\
    0 & \textrm{on}\quad\Fhb,
    \end{cases}\\
    \label{traces_def_sg}
    \dtsg\cdot\vect{n} &=\begin{cases}
    \cwmean{-\bkap\grdh\du}{\vomega}\cdot\vect{n} & \textrm{on}\quad\Fhi,\\
    -\bkap\grdh\du\cdot\vect{n} + \tau\du & \textrm{on}\quad\Fhb,
    \end{cases}
\end{align}
\end{subequations}
where $\vect{n}$ denotes either of the two unit normals to $\f{}$ on $\Fhi$, the choice being immaterial since both sides of~\eqref{traces_def_sg} change sign together, and the unit outward normal on $\Fhb$; $\vomega$ and $\varrho$ are both $\tau$-dependent valued functions defined as follows; \ie for all $\f{}\eqbydef\bnd{\e{1}}\cap\bnd{\e{2}}\in\Fhi$ then,
\begin{align}               
\vomega\vert_{\f{}}\eqbydef(\omega_{\e{1},\f{}},\omega_{\e{2},\f{}}),\quad\textrm{and}\quad\varrho\vert_{\f{}}\eqbydef\dfrac{1}{\tau_{\e{1},\f{}}+\tau_{\e{2},\f{}}},
\end{align}
where $\omega_{i}\eqbydef\varrho\vert_{\f{}}\,\tau_{\e{i},\f{}}$ with $i=1,2$. For all $\f{}\eqbydef\bnd{\e{}}\cap\bnd{\domain}\in\Fhb$, we set by convention that $\varrho\vert_{\f{}}\eqbydef\tau^{-1}_{\e{},\f{}}$ and hence $\vomega\vert_{\f{}}\eqbydef(1,0)$.
\end{lemma}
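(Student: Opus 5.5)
We derive everything from the skeletal equation obtained in the proof of Lemma~\ref{lem:local_solvability}: testing \eqref{HN_discrete_problem} with $\cdv=(0,\dtv)$, $\dtv\in\TVh{0}$, gives $\dualprod{\dtsg(\cdu)\cdot\vect{n}}{\dtv}{0}{\bnd{\Th}}=0$. This is the discrete counterpart of the flux-transmission requirement \eqref{transcond}. The plan has three steps. First, localize this identity to a single interior interface. Second, solve it pointwise for $\dtu$, using the facewise constancy of $\tau$. Third, substitute the result back into the ansatz \eqref{tracesg} to get the normal flux. Boundary faces are handled separately.

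\textbf{Step 1 (localization).} Fix $\f{}=\bnd{\e{1}}\cap\bnd{\e{2}}\in\Fhi$ and choose $\dtv$ supported on $\f{}$; this is admissible because $\TVh{0}$ is broken face by face. Using \eqref{tracesg} and $\tjump{\cdu}\vert_{\e{i},\f{}}=(\du-\dtu)\vect{n}_i$, the two sides contribute
\[
\sum_{i=1,2}\big(\dsg_i\cdot\vect{n}_i+\tau_i(\du-\dtu)\big),
\]
with $\tau_i=\tau_{\e{i},\f{}}$ and $\dsg_i=-\bkap_{\e{i}}\grdh\du\vert_{\e{i}}$. Here $\du$ and $\dtu$ are single-valued on $\f{}$ by \eqref{conforming_jump}. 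This sum equals $\jump{\dsg}+(\tau_1+\tau_2)(\du-\dtu)$, a polynomial of degree at most $k$ on $\f{}$, because $\bkap$, $\vect{n}_i$ and $\tau_i$ are constant there. Since its $\LL{2}(\f{})$ product with every element of $\Pk{k}{\f{}}$ vanishes, it is identically zero on $\f{}$. This is exactly where the constancy of $\tau$ is used: without it the expression would not lie in $\Pk{k}{\f{}}$, and orthogonality would not give pointwise vanishing.

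\textbf{Step 2 (trace formula).} Dividing by $\tau_1+\tau_2$ gives $\dtu=\du+\varrho\jump{-\bkap\grdh\du}$ on $\Fhi$, which is \eqref{traces_def_u}. It is single-valued because the jump operator is symmetric in the two sides. On $\Fhb$, $\dtu=0$ by the definition of $\TVh{0}$.

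\textbf{Step 3 (flux formula).} Insert Step 2 into \eqref{tracesg} on side $1$. Then $\tau_1(\du-\dtu)=-\omega_1\jump{\dsg}$, so
\[
\dtsg_1\cdot\vect{n}_1=\dsg_1\cdot\vect{n}_1-\omega_1(\dsg_1\cdot\vect{n}_1+\dsg_2\cdot\vect{n}_1\cdot(-1)\cdot(-1)),
\]
more precisely $\dsg_1\cdot\vect{n}_1-\omega_1(\dsg_1\cdot\vect{n}_1-\dsg_2\cdot\vect{n}_1)$. Using $\omega_1+\omega_2=1$ and $\vect{n}_2=-\vect{n}_1$, this is $(\omega_2\dsg_1+\omega_1\dsg_2)\cdot\vect{n}_1=\cwmean{\dsg}{\vomega}\cdot\vect{n}_1$. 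By the symmetry $1\leftrightarrow 2$, side $2$ gives $\dtsg_2\cdot\vect{n}_2=\cwmean{\dsg}{\vomega}\cdot\vect{n}_2=-\dtsg_1\cdot\vect{n}_1$. Hence $\dtsg\cdot\vect{n}$ is single-valued up to the orientation sign, and the choice of normal is immaterial. On $\Fhb$, substituting $\dtu=0$ into \eqref{tracesg} gives $-\bkap\grdh\du\cdot\vect{n}+\tau\du$. This matches the stated conventions $\varrho=\tau^{-1}$ and $\vomega=(1,0)$.

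The main obstacle is Step 1: justifying the passage from the weak (tested) identity to a pointwise one. This needs a careful check that every factor in the combined expression is a face polynomial of degree $\le k$, which relies on affine elements and on piecewise constant $\bkap$ and $\tau$. The remaining steps are sign bookkeeping with the normals.
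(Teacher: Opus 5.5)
Your proposal is correct and follows essentially the same route as the paper. You test with $(0,\dtv)$, use the facewise constancy of $\tau$ (together with affine elements and piecewise constant $\bkap$) to place the face residual $\jump{\dsg}+(\tau_1+\tau_2)(\du-\dtu)$ in $\Pk{k}{\f{}}$ so that it vanishes identically, solve for $\dtu$, and substitute into \eqref{tracesg}. One slip: the first intermediate expression in your Step 3, with the factor $(-1)\cdot(-1)$, puts the wrong sign on $\dsg_2\cdot\vect{n}_1$, but your corrected line $\dsg_1\cdot\vect{n}_1-\omega_1(\dsg_1\cdot\vect{n}_1-\dsg_2\cdot\vect{n}_1)$ is right and should simply replace it.
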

\begin{proof}
 Selecting $\cdv\eqbydef(0,\dtv)$, the discrete problem \eqref{HN_discrete_problem} reduces to $\dualprod{\dtsg(\cdu)\cdot\vect{n}}{\dtv}{0}{\bnd{\Th{}}}=0$ for all $\dtv\in\TVh{0}$, that is, $\dualprod{\jump{\dtsg(\cdu)}}{\dtv}{0}{\Fhi}=0$. On every interior interface $\f{}\in\Fhi$ the discrete flux jump is a face polynomial, $\jump{\dtsg(\cdu)}\vert_{\f{}}\in\Pk{k}{\f{}}$, since $\dsg(\du)\cdot\vect{n}\in\Pk{k-1}{\f{}}$ and, $\tau_{\e{i},\f{}}$ being constant, $\tau\,\tjump{\cdu}\cdot\vect{n}\in\Pk{k}{\f{}}$. As $\TVh{0}$ spans $\Pk{k}{\f{}}$ face by face, the choice $\dtv\vert_{\f{}}\eqbydef\jump{\dtsg(\cdu)}\vert_{\f{}}$ is admissible and yields $\norm{\jump{\dtsg(\cdu)}}{0}{\f{}}^2=0$, hence $\jump{\dtsg(\cdu)}\vert_{\f{}}=0$ in the $\LL{2}(\f{})$ sense. Imposing this flux continuity and isolating $\dtu$ gives, on each $\f{}=\bnd{\e{1}}\cap\bnd{\e{2}}\in\Fhi$ with $\vect{n}_1=-\vect{n}_2$,
 \begin{equation*}
 \dtu\vert_{\f{}}=\du+\dfrac{\dsg(\du)_{1}\cdot\vect{n}_{1}+\dsg(\du)_{2}\cdot\vect{n}_{2}}{\tau_{\e{1},\f{}}+\tau_{\e{2},\f{}}},\qquad\vect{n}_{2}=-\vect{n}_{1},
 \end{equation*}
 where the subscript $i$ denotes the restriction to $\e{i}$ as in Definition~\ref{def_traces_operators}; this is the compact form \eqref{traces_def_u}. The relation \eqref{traces_def_sg} then follows by substituting \eqref{traces_def_u} into \eqref{tracesg}.

\end{proof}
\subsection{Closed-form elimination: the IPN method}
\label{S:3.3}

\begin{theorem}[Closed-form reduction to the IPN method]
\label{thm:NCIP_method}
Assume that $\tau_{\e{},\f{}}$ is a strictly positive constant on every element--face pair. Substituting the closed-form trace $\dtu=\dtu(\du)$ of Lemma~\ref{lemma_Trace} into the hybridized bilinear form~\eqref{bilinear-HN} reduces the auxiliary hybridized problem~\eqref{HN_discrete_problem} to the following primal discrete problem, \ie %
\begin{equation}
    \label{NCIP_formulation}
    \begin{cases}
        \textrm{Find }\du\in\Vh\textrm{ such that,}\\
    \dbilin{a_{h}}{\du}{\dv}=\dlin{l}{\dv},\quad\forall\dv\in\Vh{},
    \end{cases}
\end{equation}
where $l(v_h) = (f,v_h)_{0,\mathcal{T}_h}$ and $a_h:\Vh\times\Vh\rightarrow\R$ is given as follows, 
\begin{equation}
    \label{bilinear_NCIP}
    \dbilin{a_h}{\du}{\dv}\eqbydef\dbilin{a}{\du}{\dv}+\epsilon\dbilin{s_{h}}{\du}{\dv}+\dbilin{\eta_{h}}{\du}{\dv}.
\end{equation}
Here, the second and third terms are respectively called the interior- and Nitsche's boundary-penalty mechanisms, \ie  
\begin{align*}
    \dbilin{s_{h}}{\du}{\dv}\eqbydef & -\dualprod{\varrho\jump{\bkap\grdh{\du}}}{\jump{\bkap\grdh{\dv}}}{0}{\Fhi},\\
    \dbilin{\eta_{h}}{\du}{\dv}\eqbydef & -\dualprod{\bkap\grdh\du\cdot\vect{n}}{\dv}{0}{\Fhb}-\epsilon\dualprod{\bkap\grdh\dv\cdot\vect{n}}{\du}{0}{\Fhb}+\dualprod{\tau\du}{\dv}{0}{\Fhb},
    \end{align*}
where $\varrho$ is defined in Lemma \ref{lemma_Trace}. In particular, the reduction is performed analytically and requires no assembled skeletal Schur complement; conversely, the trace $\dtu$ and the numerical flux $\dtsg$ are recovered from $\du$ in closed form through~\eqref{traces_def}.
\end{theorem}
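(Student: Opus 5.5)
The plan is to test the hybridized problem \eqref{HN_discrete_problem} with pairs $\cdv=(\dv,0)$ only. The skeletal equations, tested with $(0,\dtv)$, are already used up in Lemma~\ref{lemma_Trace}: they give the closed form of $\dtu$ and the continuity $\jump{\dtsg(\cdu)}=0$ on $\Fhi$. Conversely, any $\du$ solving the reduced problem, paired with $\dtu$ from \eqref{traces_def_u}, solves the full hybrid problem, because the $\dtv$-equations hold by construction. So equivalence reduces to showing that $\dbilin{\boldsymbol{a}_h}{(\du,\dtu(\du))}{(\dv,0)}=\dbilin{a_h}{\du}{\dv}$ for all $\du,\dv\in\Vh$. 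The right-hand side $\inerprod{f}{\dv}{0}{\Th}=\dlin{l}{\dv}$ is unchanged.

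I would expand \eqref{bilinear-HN} term by term, with $\cdv=(\dv,0)$, so that $\tjump{\cdv}=\dv\,\vect{n}$.
\begin{itemize}
\item The volume term gives $\inerprod{\bkap\grdh\du}{\grdh\dv}{0}{\Th}=\dbilin{a}{\du}{\dv}$.
\item For the second term I would use \eqref{tracesg} and Lemma~\ref{lemma:identity_rel} with $\vect{b}=\dtsg(\cdu)$ and the test pair $(\dv,0)$. This gives $\dualprod{\dtsg\cdot\vect{n}}{\dv}{0}{\Fhb}+\dualprod{\jump{\dtsg}}{\dv}{0}{\Fhi}$. The interior part vanishes by the flux continuity of Lemma~\ref{lemma_Trace}. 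On $\Fhb$, since $\dtu=0$, it equals $-\dualprod{\bkap\grdh\du\cdot\vect{n}}{\dv}{0}{\Fhb}+\dualprod{\tau\du}{\dv}{0}{\Fhb}$. This yields the first and third pieces of $\eta_h$.
\item For the third term I would apply Lemma~\ref{lemma:identity_rel} with $\vect{b}=\dsg(\dv)$ and the pair $\cdu$. This gives $\epsilon\dualprod{-\bkap\grdh\dv\cdot\vect{n}}{\du}{0}{\Fhb}+\epsilon\dualprod{\jump{-\bkap\grdh\dv}}{\du-\dtu}{0}{\Fhi}$. The boundary part is the symmetrizing piece of $\eta_h$. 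In the interior part I would substitute $\du-\dtu=-\varrho\jump{-\bkap\grdh\du}=\varrho\jump{\bkap\grdh\du}$ from \eqref{traces_def_u}. That produces $-\epsilon\dualprod{\varrho\jump{\bkap\grdh\du}}{\jump{\bkap\grdh\dv}}{0}{\Fhi}=\epsilon\dbilin{s_h}{\du}{\dv}$.
\end{itemize}
Summing the three contributions gives \eqref{bilinear_NCIP}.

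The main point requiring care is the sign and orientation bookkeeping. The jump $\jump{\cdot}$ of a vector field is orientation-independent, and $\varrho$ is single-valued per face, so the product $\jump{\cdot}\,\jump{\cdot}$ is well defined. I also need to check that the sign of $\dsg=-\bkap\grdh$ combines correctly with $\tjump{\cdot}=(\dv-\dtv)\vect{n}$. A second subtle point is that the vanishing of $\dualprod{\jump{\dtsg}}{\dv}{0}{\Fhi}$ needs the strong (pointwise, $\LL{2}(\f{})$) continuity of the flux, not merely its orthogonality to $\TVh{0}$. This is exactly where the constancy of $\tau_{\e{},\f{}}$ enters, via Lemma~\ref{lemma_Trace}: the trace $\dv|_{\f{}}$ lies in $\Pk{k}{\f{}}$ anyway, but strong continuity is what makes \eqref{traces_def_u} exact.

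Finally, I would note the remaining claims. The substitution uses only face-local algebraic formulas, so no Schur complement is assembled. The recovery of $\dtu$ and $\dtsg$ from $\du$ is precisely \eqref{traces_def}.
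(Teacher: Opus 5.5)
Your proposal is correct and follows essentially the paper's proof. It applies Lemma~\ref{lemma:identity_rel} twice: once with $\vect{b}=\dtsg$, where the interior part vanishes by flux continuity and the boundary part gives the consistency and $\tau$-terms of $\eta_h$, and once with $\vect{b}=\dsg(\dv)$, where the interior part becomes $\epsilon s_h$ via $\du-\dtu=\varrho\jump{\bkap\grdh\du}$; your sign bookkeeping is right.

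The differences are cosmetic. You test with $(\dv,0)$ and state the converse direction of the equivalence explicitly, whereas the paper keeps general $(\dv,\dtv)$ and lets the $\dtv$-dependence drop by flux continuity. Also, as you half-note yourself, for the second term orthogonality of $\jump{\dtsg}$ to $\TVh{0}$ already suffices since $\dv\vert_{\f{}}\in\Pk{k}{\f{}}$, so the constancy of $\tau$ is needed only for the closed form of $\du-\dtu$ in the third term.
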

\begin{proof}
We insert the closed-form trace $\dtu=\dtu(\du)$ of Lemma~\ref{lemma_Trace} into the flux form~\eqref{bilinear-HN}. By construction it enforces the flux continuity $\jump{\dtsg(\du)}\vert_{\f{}}=0$ for all $\f{}\in\Fhi$, while $\dtv\vert_{\f{}}=0$ for all $\f{}\in\Fhb$; hence Lemma~\ref{lemma:identity_rel}, applied with $\vect{b}\eqbydef\dtsg(\du)$, gives
\begin{equation}
\label{consistency_decomp}
\dualprod{\dtsg(\cdu)}{\tjump{\cdv}}{0}{\bnd{\Th{}}} =\dualprod{\jump{\dtsg(\du)}}{\dv-\dtv}{0}{\Fh}=\dualprod{\dtsg(\du)\cdot\vect{n}}{\dv}{0}{\Fhb}.
\end{equation}
Applying Lemma~\ref{lemma:identity_rel} once more, now with $\vect{b}\eqbydef\dsg(\dv)$, and inserting the closed-form trace~\eqref{traces_def_u}, the symmetry term reads 
    \begin{equation}
        \label{symmetry_decomp}
        \dualprod{\dsg(\dv)}{\tjump{\cdu}}{0}{\bnd{\Th{}}}=\dualprod{\jump{\dsg(\dv)}}{\du-\dtu}{0}{\Fh}
        =\dualprod{\dsg(\dv)\cdot\vect{n}}{\du}{0}{\Fhb}-\dualprod{\varrho\jump{\dsg(\du)}}{\jump{\dsg(\dv)}}{0}{\Fhi}.
    \end{equation}
By combining \eqref{consistency_decomp}-\eqref{symmetry_decomp} and identifying each term, one obtains
\begin{align*}
    \dbilin{s_{h}}{\du}{\dv}\eqbydef & -\dualprod{\varrho\jump{\dsg(\du)}}{\jump{\dsg(\dv)}}{0}{\Fhi},\\
    \dbilin{\eta_{h}}{\du}{\dv}\eqbydef &\dualprod{\dtsg(\du)\cdot\vect{n}}{\dv}{0}{\Fhb}+\epsilon\dualprod{\dsg(\dv)\cdot\vect{n}}{\du}{0}{\Fhb}.
    \end{align*}
This concludes the proof.
\end{proof}
According to the symmetrization parameter $\epsilon$, the IPN method~\eqref{NCIP_formulation} comes in three variants: the symmetric (S-IPN, $\epsilon=1$), non-symmetric (N-IPN, $\epsilon=-1$), and incomplete (I-IPN, $\epsilon=0$) schemes. If $\epsilon\neq 0$, the IPN method augments the corresponding standard Nitsche method by the interior penalty term $\epsilon s_h$, which acts on the jump of the normal diffusive flux $\jump{\bkap\grdh\du}$ across interior faces, in the spirit of CIP methods. Since $s_h$ is negative semi-definite, the genuinely penalizing flux-jump contribution corresponds to $\epsilon=-1$ (N-IPN), whereas the symmetric variant $\epsilon=+1$ (S-IPN) carries a sign-reversed, anti-penalizing term. In terms of cost, the IPN method keeps exactly the unknowns of the continuous Galerkin method. As for CIP-type stabilizations~\cite{BurmanHansbo2004}, the interior penalty $s_h$ couples the gradients of neighboring elements, and thereby extends the CG stencil from the element patch of a degree of freedom to the \emph{face-neighbours} of that patch (Figure~\ref{fig:stencil}). The stencil thus remains local, the number of couplings per unknown being bounded independently of $h$. The incomplete variant I-IPN ($\epsilon=0$), for which the interior mechanism is absent, retains the continuous Galerkin stencil exactly.
\begin{figure}[!ht]
\centering
\includegraphics[width=0.50\textwidth]{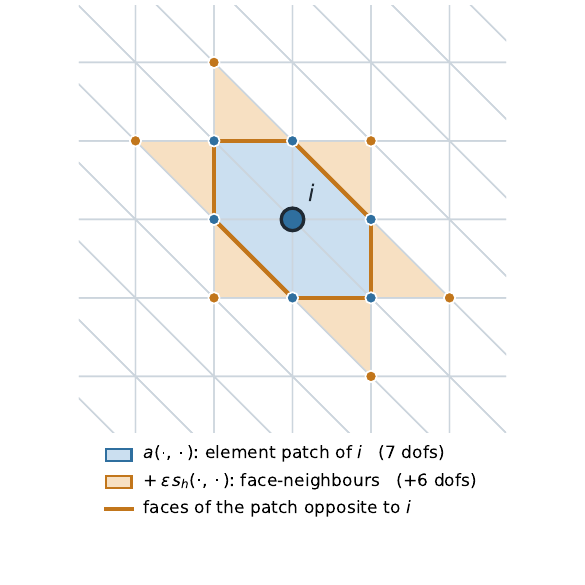}
\caption{Stencil of the IPN method at an interior degree of freedom $i$ (degree $k=1$). In blue, the element patch of $i$, which carries the volume form $a$; in orange, the face-neighbours of that patch, reached by the interior penalty $s_h$ alone through the faces of the patch opposite to $i$ (thick orange), on which $\bkap\grd\varphi_i$ does not vanish. The couplings displayed are structural, \ie determined by the overlap of the supports.}
\label{fig:stencil}
\end{figure}
\begin{remark}[Analogy with the Schur complement]
\label{remark2}
Let $\mathrm{U}_h$ denote the vector of primal degrees of freedom, and use the usual block notation for the hybrid system. Since the skeletal trace space is broken facewise, the skeletal block is block diagonal and can be eliminated face by face, giving
\begin{equation}
    \label{schur_complement}
    [\mathrm{A}_{uu}-\mathrm{A}_{u\hat{u}}\mathrm{A}^{-1}_{\hat{u}\hat{u}}\mathrm{A}_{\hat{u}u}]\mathrm{U}_h=\mathrm{F}.
\end{equation}
This Schur complement is precisely the matrix representation of the reduced IPN problem~\eqref{NCIP_formulation}. The closed-form trace formula of Theorem~\ref{thm:NCIP_method} performs this reduction analytically, avoiding the explicit assembly and inversion of the skeletal block.
\end{remark}
The method is boundary-nonconforming, in the sense that $\Vh\not\subset\hilbert{1}{\domain}{0}$ although the primal space is $H^1$-conforming ($\Vh\subset\hilbert{1}{\domain}{}$): the Dirichlet condition is imposed weakly through the Nitsche boundary mechanism $\eta_{h}$~\cite{Nitsche1971,Stenberg1995}. As the volume form $a$ and the interior penalty $s_{h}$ are both unconditionally symmetric, $\eta_{h}$ is the sole source of asymmetry of $a_h$; in particular, $a_h$ is symmetric if and only if $\epsilon=1$.
A symmetrized, conforming counterpart of the IPN method is obtained by restricting the primal variable to the tighter subspace $V_{h,0}\eqbydef\Vh\cap\hilbert{1}{\domain}{0}$, whose functions vanish on the domain boundary.
\begin{proposition}[The conforming $\HH{1}_{0}$ variant]
\label{prop:conforming}
Seek $\du\in V_{h,0}$ such that
\begin{equation}
    \label{CCIP_formulation}
    \dbilin{a^{\mathfrak{c}}_{h}}{\du}{\dv}=\dlin{l}{\dv},\quad\forall\dv\in V_{h,0},
    \qquad a^{\mathfrak{c}}_{h}\eqbydef a+\epsilon\,s_{h}.
\end{equation}
Then the Nitsche boundary term $\eta_{h}$ vanishes, so that $a^{\mathfrak c}_h=a+\epsilon\,s_h$ reads
\begin{equation}
\label{CCIP_bilinear}
\dbilin{a^{\mathfrak c}_h}{\du}{\dv}=\inerprod{\bkap\grdh\du}{\grdh\dv}{0}{\Th}-\epsilon\dualprod{\varrho\jump{\bkap\grdh\du}}{\jump{\bkap\grdh\dv}}{0}{\Fhi}.
\end{equation}
The form $a^{\mathfrak c}_h$ is symmetric for every $\epsilon\in\{0,\pm1\}$, and the three variants form a single transmissibility-weighted flux-jump family parametrized by $\epsilon$: the standard continuous Galerkin (CG) method ($\epsilon=0$), for which the interior mechanism is simply absent; a \emph{positively stabilized} flux-jump CIP \emph{à la} Douglas--Dupont \cite{DouglasCIP1976,BurmanHansbo2004} ($\epsilon=-1$); and a \emph{sign-reversed} flux-jump variant in the spirit of the variational multiscale stabilization of Codina \al \cite{CodinaPrincipeBaiges2009} ($\epsilon=+1$).
\end{proposition}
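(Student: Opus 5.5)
The plan is to verify the three assertions of the proposition in turn: the boundary mechanism disappears on $V_{h,0}$, the resulting form is symmetric, and the choice of $\epsilon$ splits it into the three announced variants.

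\textbf{Vanishing of $\eta_h$.} For $\du,\dv\in V_{h,0}$, both traces vanish on $\bnd\domain$, hence on every $\f{}\in\Fhb$. Each of the three summands of $\eta_h$ contains $\du$ or $\dv$ as an undifferentiated factor on $\Fhb$:
\begin{itemize}
\item $\dualprod{\bkap\grdh\du\cdot\vect{n}}{\dv}{0}{\Fhb}$ contains $\dv$;
\item $\dualprod{\bkap\grdh\dv\cdot\vect{n}}{\du}{0}{\Fhb}$ contains $\du$;
\item $\dualprod{\tau\du}{\dv}{0}{\Fhb}$ contains both.
\end{itemize}
So each is zero and $\eta_h\equiv 0$ on $V_{h,0}\times V_{h,0}$. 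The only point to check is that the normal fluxes $\bkap\grdh\du\cdot\vect{n}$ are in $\lebesgue{2}{\f{}}$, so that these integrals make sense. This holds because they are piecewise polynomials. Restricting $a_h$ from Theorem~\ref{thm:NCIP_method} to $V_{h,0}$ therefore gives $a+\epsilon s_h$. Writing out $a$ and $s_h$ gives \eqref{CCIP_bilinear} directly, using $\grdh=\grd$ on $V_{h,0}\subset\hilbert{1}{\domain}{}$.

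\textbf{Symmetry.} We check $a$ and $s_h$ separately.
\begin{itemize}
\item The form $a$ is symmetric because $\bkap$ is a symmetric tensor.
\item The form $s_h$ is symmetric because $\varrho$ is a scalar weight on each face. It is given by $\varrho\vert_{\f{}}=(\tau_{\e{1},\f{}}+\tau_{\e{2},\f{}})^{-1}$, which does not depend on which element is labelled $\e{1}$.
\item The jump $\jump{\bkap\grdh\cdot}$ is a scalar that changes sign with the choice of normal. It enters $s_h$ quadratically, so $s_h$ is independent of that orientation.
\end{itemize}
Hence $a^{\mathfrak c}_h$ is symmetric for every $\epsilon$. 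This contrasts with the Nitsche case, where the asymmetry came only from $\eta_h$.

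\textbf{Classification of the variants.} First, $s_h(\dv,\dv)=-\norm{\varrho^{1/2}\jump{\bkap\grdh\dv}}{0}{\Fhi}^2\le 0$, using $\varrho>0$ from the positivity of $\tau$. The three cases then follow.
\begin{itemize}
\item $\epsilon=0$: this is exactly the standard CG method.
\item $\epsilon=-1$: this adds the nonnegative term $\dualprod{\varrho\jump{\bkap\grd\du}}{\jump{\bkap\grd\dv}}{0}{\Fhi}$, which is a positively weighted flux-jump (Douglas--Dupont type) stabilization.
\item $\epsilon=+1$: this subtracts the same term, giving the sign-reversed variant.
\end{itemize}

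None of these steps is a real obstacle. The only delicate point is notational. The statement calls the family "symmetric for every $\epsilon$", so one must stress that the $\epsilon$-weighted adjoint term $\epsilon\dualprod{\bkap\grdh\dv\cdot\vect{n}}{\du}{0}{\Fhb}$, the only source of asymmetry in $a_h$, has been removed together with $\eta_h$.
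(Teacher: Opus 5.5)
Your proposal is correct and follows the paper's proof. You show that $\eta_{h}$ vanishes on $V_{h,0}\times V_{h,0}$ because each of its three terms carries an undifferentiated factor $\du$ or $\dv$ on $\Fhb$. Symmetry then follows from that of $a$ and $s_h$, since $\eta_h$ was the only nonsymmetric part of $a_h$. Your explicit sign check $s_h(\dv,\dv)\le 0$ for the classification is a detail the paper leaves implicit.

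One small aside on your orientation remark. Under Definition~\ref{def_traces_operators}, $\jump{\boldsymbol{\varphi}}=\boldsymbol{\varphi}_1\cdot\vect{n}_1+\boldsymbol{\varphi}_2\cdot\vect{n}_2$ uses each element's own outward normal, so it is already independent of the labelling and does not change sign. The remark is therefore unnecessary, and symmetry in $(\du,\dv)$ never depended on it.
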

\begin{proof}
Let $\du,\dv\in V_{h,0}$. Each term of $\eta_{h}$ in \eqref{bilinear_NCIP} is a boundary integral over $\Fhb$ of a quantity involving $\du$ or $\dv$; since these vanish on $\bnd{\domain}$, we have $\eta_{h}\equiv 0$. Hence $a^{\mathfrak c}_h=a+\epsilon\,s_h$, the interior penalty $s_h$ being unchanged, which is \eqref{CCIP_bilinear}. As $\eta_{h}$ is the only non-symmetric contribution to $a_h$ and both $a$ and $s_{h}$ are symmetric, $a^{\mathfrak c}_h$ is symmetric for every $\epsilon$.
\end{proof}
The solutions of \eqref{NCIP_formulation} and \eqref{CCIP_formulation} do not coincide but are expected to be close. This conforming variant enjoys more favorable stability and convergence properties than its nonconforming counterpart, analyzed in Section~\ref{S:4} (Remarks~\ref{rmk:coercivity-conforming} and~\ref{rmk:L2-conforming}).
\begin{remark}[Non-homogeneous Dirichlet data]
\label{rmk:inhomogeneous}
For clarity, the method and its analysis are presented for homogeneous Dirichlet conditions. The extension to non-homogeneous data $u=g$ on $\bnddom$ is standard and affects only the right-hand side: the linear form of the nonconforming method~\eqref{NCIP_formulation} becomes
\begin{equation*}
\dlin{l}{\dv}=\inerprod{f}{\dv}{0}{\Th}-\epsilon\dualprod{\bkap\grdh\dv\cdot\vect{n}}{g}{0}{\Fhb}+\dualprod{\tau g}{\dv}{0}{\Fhb},
\end{equation*}
The bilinear form is unchanged. With the exact boundary datum $g$, consistency, Galerkin orthogonality and the approximation argument extend under the same regularity assumptions. This weak enforcement is used for the Kellogg test in Section~\ref{S:5.2}. For the strongly imposed boundary variant, prescribing a discrete trace $g_h\neq g$ introduces an additional boundary lifting into the error estimate.
\end{remark}
\begin{remark}[Single-valued numerical normal flux and global conservation]
\label{rmk:conservation}
The closed-form elimination of the skeletal unknown yields a single-valued numerical normal flux $\dtsg\cdot\vect{n}$ across interior faces. Taking the constant test function $\dv\equiv1\in\Vh$ in~\eqref{NCIP_formulation} gives the global conservation identity
\begin{equation}
\label{global_conservation}
\sumT\Big(\inerprod{f}{1}{0}{\e{}}-\dualprod{\dtsg\cdot\vect{n}}{1}{0}{\bnd{\e{}}}\Big)=0,
\end{equation}
Thus, for every $\epsilon\in\{0,\pm1\}$, the total outward numerical flux through the boundary equals the integrated source term, the interior contributions cancelling pairwise by single-valuedness. This global balance applies to the formulation with weakly imposed Dirichlet conditions. It does not follow by the same argument for the strongly imposed boundary variant~\eqref{CCIP_formulation}, whose test space $V_{h,0}$ excludes constants.
\end{remark}

\section{Stability and a priori error analysis}
\label{S:4}

We now turn to the analysis of the IPN method~\eqref{NCIP_formulation}, the actual object of this work. After specifying the stabilization parameter $\tau$, we establish the consistency, discrete coercivity and boundedness of the bilinear form $a_h$, from which the well-posedness of~\eqref{NCIP_formulation} and \textit{a priori} error estimates of $\du$ in the energy- and $\LL{2}$-norms follow.

\subsection{Stability analysis}

For every integer $m\ge0$, define the discrete trace constant $C_{\mathrm{tr}}(m)$ by
\begin{equation}
\label{discret_trace_ineq}
\norm{p}{0}{\f{}}^2\le C_{\mathrm{tr}}^2(m)
\frac{\abs{\f{}}}{\abs{\e{}}}\norm{p}{0}{\e{}}^2,
\qquad p\in\Pk{m}{\e{}},\quad \f{}\in\FE.
\end{equation}
The constant is chosen uniformly over the mesh, and the inequality holds componentwise for vector-valued polynomials. On a $d$-simplex, the sharp bound of Warburton and Hesthaven~\cite{WarburtonHesthaven2003} gives
\begin{equation}
\label{eq:trace-degree}
C_{\mathrm{tr}}^2(m)=\frac{(m+1)(m+d)}{d}.
\end{equation}
The geometric dependence is carried explicitly by the local ratio $\abs{\f{}}/\abs{\e{}}$.

We now specify the stabilization function $\tau$ left abstract in~\eqref{tracesg}. We use the degree-dependent normalization
\begin{equation}
\label{CT_value}
\C{T}\eqbydef C_{\mathrm{tr}}(k-1),\qquad
\C{T}^2=\frac{k(k+d-1)}{d},
\end{equation}
and, following the transmissibility-based construction of~\cite{EtangsaleUIPDG2026}, set
\begin{equation}
\label{penalty_term}
\tau\vert_{\e{},\f{}}\eqbydef\alpha_{0}\,\C{T}^2\,\kappa_{\e{},\f{}}\,\dfrac{\abs{\f{}}}{\abs{\e{}}},\qquad\kappa_{\e{},\f{}}\eqbydef\vect{n}^{T}_{\e{},\f{}}\bkap_{\e{}}\vect{n}_{\e{},\f{}},
\end{equation}
where $\alpha_0>0$ and $\kappa_{\e{},\f{}}\abs{\f{}}/\abs{\e{}}$ is the local transmissibility coefficient. The normalization $\C{T}$ is the trace constant at degree $k-1$, the degree of the discrete gradients entering the flux estimates below. Being defined elementwise, $\tau$ is positive and double-valued on $\Fhi$, and the weights $(\vomega,\varrho)$ of Lemma~\ref{lemma_Trace} are determined accordingly. Diffusion-dependent penalty weights and conjugate weighted averages of this kind originate in the weighted interior penalty of Ern, Stephansen and Zunino~\cite{ESZIMA08,BurmanZunino2006}.

The exact solution is assumed to belong to $V_s$, where
\begin{equation*}
V_{s}\eqbydef\Big\{v\in\hilbert{1}{\domain}{0}:\ v\vert_{\e{}}\in\hilbert{1+s}{\e{}}{}\ \forall\e{}\in\Th,\ \text{and}\ \bkap\grd v\in\boldsymbol{H}(\mathrm{div},\domain)\Big\},\qquad\ohalf< s\leq k,
\end{equation*}
the membership $\bkap\grd v\in\boldsymbol{H}(\mathrm{div},\domain)$ expressing that the normal diffusive flux is single-valued across the material interfaces. For the continuity estimates, we use the larger space
\begin{equation*}
V(h)\eqbydef\bigcup_{1/2<t\leq k}\big(\hilbert{1}{\domain}{}\cap\hilbert{1+t}{\Th}{}\big).
\end{equation*}
It contains $V_s$, $\Vh$ and their sum, and allows the primal and dual solutions to have different regularities. Its elementwise gradients have $\LL{2}$ face traces, since $t>1/2$. For all $v\in V(h)$, set
\begin{equation}
    \label{nrj_norm1}
    \tnorm{v}^2_{h}\eqbydef\norm{\bkap^{\ohalf}\grdh{v}}{0}{\Th}^2+\hdgseminorm{v}{\partial}^2,
\end{equation}
where $\hdgseminorm{\cdot}{\partial}$ corresponds to the mesh-dependent seminorm of $v$ on $\bnd{\domain}$ which is also diffusion-dependent and is given by:
\begin{equation}
    \hdgseminorm{v}{\partial}^2\eqbydef\sumFb\hdgseminorm{v}{\partial,\f{}}^2\quad\textrm{where}\quad\hdgseminorm{v}{\partial,\f{}}\eqbydef\norm{\tau^{\ohalf}v}{0}{\f{}}.
\end{equation}
In the sequel, $\C{}>0$ denotes a generic constant that is independent of $h$, $h_{\e{}}$ and $\bkap$, unless explicitly stated otherwise. In particular, the constants in the duality estimates below may depend on the diffusion coefficients through the piecewise elliptic regularity constant. We will also use the following notations $a\lsim b$ (resp. $a\gsim b$) extensively, meaning that there exists a generic constant $\C{}>0$ such that $a\leq\C{}b$ (resp. $a\geq\C{}b$). We introduce the stabilization-weighted normal-flux seminorm
\begin{equation}\label{aug_seminorm}
\fluxseminorm{v}^2\eqbydef\sumT\sum_{\f{}\in\FE}
\norm{\tau_{\e{},\f{}}^{-\ohalf}(\bkap\grdh v)|_{\e{}}\cdot\vect n_{\e{},\f{}}}{0}{\f{}}^2.
\end{equation}
It is finite for $v\in V(h)$ and is unchanged by the minus sign in the conservative diffusive flux.
\begin{lemma}[Extended interface bounds]\label{lemma_upper-bounds}
For all $v,w\in V(h)$,
\begin{subequations}
\begin{align}
\abs{\dualprod{\bkap\grdh v\cdot\vect n}{w}{0}{\Fhb}}
&\le\fluxseminorm{v}\hdgseminorm{w}{\partial},\label{constency_bound}\\
\norm{\varrho^{\ohalf}\jump{\bkap\grdh v}}{0}{\Fhi}
&\le\fluxseminorm{v}.\label{interior_penalty_bound}
\end{align}
\end{subequations}
\end{lemma}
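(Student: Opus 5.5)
Both bounds are face-by-face applications of the Cauchy--Schwarz inequality, with the weight $\tau$ inserted and removed as $\tau^{\ohalf}\tau^{-\ohalf}$, followed by summation and comparison with the definition~\eqref{aug_seminorm} of $\fluxseminorm{\cdot}$. No inverse or trace inequality is needed. The traces make sense because $v\in V(h)$ has elementwise gradients in $\hilbert{t}{\e{}}{}$ with $t>1/2$, so $(\bkap\grdh v)|_{\e{}}\cdot\vect n_{\e{},\f{}}\in\lebesgue{2}{\f{}}$.

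\emph{Boundary bound~\eqref{constency_bound}.} On each $\f{}\in\Fhb$, with $\f{}\subset\bnd{\e{}}$, the weight $\tau=\tau_{\e{},\f{}}$ is a positive constant. We write
\[
\dualprod{\bkap\grdh v\cdot\vect n}{w}{0}{\f{}}=\dualprod{\tau_{\e{},\f{}}^{-\ohalf}\bkap\grdh v\cdot\vect n}{\tau_{\e{},\f{}}^{\ohalf}w}{0}{\f{}}
\le\norm{\tau_{\e{},\f{}}^{-\ohalf}\bkap\grdh v\cdot\vect n}{0}{\f{}}\hdgseminorm{w}{\partial,\f{}}.
\]
Summing over $\Fhb$ and applying the discrete Cauchy--Schwarz inequality gives the factor $\hdgseminorm{w}{\partial}$ times the square root of the boundary-face part of $\fluxseminorm{v}^2$. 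The latter is bounded by $\fluxseminorm{v}$, since $\fluxseminorm{v}^2$ also contains the nonnegative interior contributions.

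\emph{Interior bound~\eqref{interior_penalty_bound}.} On $\f{}=\bnd{\e{1}}\cap\bnd{\e{2}}\in\Fhi$, set $a_i\eqbydef(\bkap\grdh v)|_{\e{i}}\cdot\vect n_i$ and $\tau_i\eqbydef\tau_{\e{i},\f{}}$, so that $\jump{\bkap\grdh v}=a_1+a_2$ and $\varrho=(\tau_1+\tau_2)^{-1}$. The key step is the pointwise weighted Cauchy--Schwarz inequality
\[
(a_1+a_2)^2=\big(\tau_1^{\ohalf}\tau_1^{-\ohalf}a_1+\tau_2^{\ohalf}\tau_2^{-\ohalf}a_2\big)^2\le(\tau_1+\tau_2)\big(\tau_1^{-1}a_1^2+\tau_2^{-1}a_2^2\big).
\]
Multiplying by $\varrho$ yields $\varrho\,(a_1+a_2)^2\le\tau_1^{-1}a_1^2+\tau_2^{-1}a_2^2$ pointwise on $\f{}$. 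Integrating over $\f{}$ and summing over $\Fhi$ gives exactly the interior element--face pairs of $\fluxseminorm{v}^2$, each interior face being counted once from each side as in the multiset $\bnd{\Th}$. This sum is again at most $\fluxseminorm{v}^2$, and taking square roots concludes.

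I expect no real obstacle. The only points needing care are checking that the face-constancy of $\tau_{\e{},\f{}}$ and the definition $\omega_i=\varrho\tau_i$, $\omega_1+\omega_2=1$, make the pointwise inequality sharp and constant-free, and correctly matching the multiset bookkeeping of $\bnd{\Th}$ with the sums over $\Fhi$ and $\Fhb$.
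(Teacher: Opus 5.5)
Your proposal is correct and matches the paper's proof: a weighted Cauchy--Schwarz inequality (inserting $\tau^{1/2}\tau^{-1/2}$) on each boundary face, and on each interior face the pointwise bound $\varrho(a_1+a_2)^2\le\tau_1^{-1}a_1^2+\tau_2^{-1}a_2^2$, which you then integrate, sum, and compare with the definition of the flux seminorm. One correction to your closing remark: as the paper notes, this pointwise inequality needs only $\tau_1,\tau_2>0$, not face-constancy of $\tau$ or the weights $\omega_i$, and it is constant-free but not sharp in general.
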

\begin{proof}
The boundary inequality is weighted Cauchy--Schwarz.
On an interior face, set $q_i=(\bkap\grdh v)|_{\e{i}}\cdot\vect n_i$ and $\tau_i=\tau_{\e{i},\f{}}$. Since $\varrho=(\tau_1+\tau_2)^{-1}$,
\[
\varrho(q_1+q_2)^2\le\tau_1^{-1}q_1^2+\tau_2^{-1}q_2^2.
\]
Integrating and summing over interior faces proves the second inequality. These estimates require only positive stabilizations.
\end{proof}
The constitutive normal-flux estimate and~\eqref{penalty_term} give, for each element--face pair,
\begin{equation}\label{weighted_flux_bound}
\norm{\tau_{\e{},\f{}}^{-\ohalf}(\bkap\grdh v)|_{\e{}}\cdot\vect n_{\e{},\f{}}}{0}{\f{}}^2
\le\frac{1}{\alpha_0\C{T}^2}\frac{|\e{}|}{|\f{}|}\norm{\bkap^{\ohalf}\grdh v}{0}{\f{}}^2.
\end{equation}
For $\dv\in\Vh$, elementwise constancy of $\bkap$ makes $\bkap^{\ohalf}\grdh\dv$ a polynomial of degree at most $k-1$. Applying~\eqref{discret_trace_ineq} at this degree and summing over faces yields
\begin{equation}\label{discrete-trace-control}
\fluxseminorm{\dv}\le\C{0}\norm{\bkap^{\ohalf}\grdh\dv}{0}{\Th},\qquad
\C{0}\eqbydef(\eta_0/\alpha_0)^{\ohalf}.
\end{equation}
\begin{corollary}[Discrete interface bounds]
\label{cor:discrete-bounds}
Let $\C{0}=(\eta_0/\alpha_0)^{\ohalf}$ be as in~\eqref{discrete-trace-control}. For all $\dv\in\Vh$ and all $w\in V(h)$,
\begin{subequations}
\begin{align}
\label{disc_constency_bound}
\abs{\dualprod{\bkap\grdh\dv\cdot\vect{n}}{w}{0}{\Fhb}}&\leq\C{0}\,\norm{\bkap^{\ohalf}\grdh\dv}{0}{\Th}\,\hdgseminorm{w}{\partial},\\
\label{disc_interior_penalty_bound}
\norm{\varrho^{\ohalf}\jump{\bkap\grdh\dv}}{0}{\Fhi}&\leq\C{0}\,\norm{\bkap^{\ohalf}\grdh\dv}{0}{\Th}.
\end{align}
\end{subequations}
These follow from Lemma~\ref{lemma_upper-bounds} with $v\eqbydef\dv$ and the discrete-trace control~\eqref{discrete-trace-control}.
\end{corollary}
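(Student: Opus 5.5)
The corollary is a direct composition of two earlier estimates, so the plan is short. Fix $\dv\in\Vh$ and $w\in V(h)$. Since $\Vh\subset V(h)$, Lemma~\ref{lemma_upper-bounds} applies with $v\eqbydef\dv$. Its boundary estimate~\eqref{constency_bound} gives
\[
\abs{\dualprod{\bkap\grdh\dv\cdot\vect n}{w}{0}{\Fhb}}\le\fluxseminorm{\dv}\,\hdgseminorm{w}{\partial},
\]
and its interior estimate~\eqref{interior_penalty_bound} gives $\norm{\varrho^{\ohalf}\jump{\bkap\grdh\dv}}{0}{\Fhi}\le\fluxseminorm{\dv}$. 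Both claimed inequalities then follow by inserting the discrete-trace control~\eqref{discrete-trace-control}, $\fluxseminorm{\dv}\le\C{0}\norm{\bkap^{\ohalf}\grdh\dv}{0}{\Th}$, into the right-hand sides.

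The only step with real content is~\eqref{discrete-trace-control} itself, which I would verify as follows. On each element--face pair $(\e{},\f{})$, I first use $\abs{\bkap_{\e{}}\grdh\dv\cdot\vect n}^2\le\kappa_{\e{},\f{}}\,\abs{\bkap_{\e{}}^{\ohalf}\grdh\dv}^2$. This is Cauchy--Schwarz in the $\bkap_{\e{}}$ inner product:
\[
\abs{\bkap_{\e{}}\xi\cdot\vect n}=\abs{(\bkap_{\e{}}^{\ohalf}\xi)\cdot(\bkap_{\e{}}^{\ohalf}\vect n)}\le\abs{\bkap_{\e{}}^{\ohalf}\xi}\,\kappa_{\e{},\f{}}^{\ohalf}.
\]
The factor $\kappa_{\e{},\f{}}$ cancels against the one in $\tau_{\e{},\f{}}$ from~\eqref{penalty_term}, which yields~\eqref{weighted_flux_bound}.

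Next, $\bkap_{\e{}}^{\ohalf}\grdh\dv\vert_{\e{}}\in[\Pk{k-1}{\e{}}]^d$, because $\bkap$ is elementwise constant. So the discrete trace inequality~\eqref{discret_trace_ineq} with $m=k-1$ applies componentwise, and its constant is exactly $\C{T}^2$ by~\eqref{CT_value}. The geometric ratio $\abs{\f{}}/\abs{\e{}}$ cancels, and each face contributes at most $\alpha_0^{-1}\norm{\bkap^{\ohalf}\grdh\dv}{0}{\e{}}^2$.

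Summing over the at most $\eta_0$ faces of each element and then over elements gives $\fluxseminorm{\dv}^2\le(\eta_0/\alpha_0)\norm{\bkap^{\ohalf}\grdh\dv}{0}{\Th}^2$, which is the claim with $\C{0}=(\eta_0/\alpha_0)^{\ohalf}$. The main point to check is this cancellation of $\kappa_{\e{},\f{}}$, $\C{T}$ and $\abs{\f{}}/\abs{\e{}}$. It is what makes $\C{0}$ independent of $h$ and of the contrast of $\bkap$.
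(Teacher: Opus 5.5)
Your proposal is correct and follows the paper's argument: you apply Lemma~\ref{lemma_upper-bounds} with $v=v_h$ and then insert the discrete-trace control~\eqref{discrete-trace-control}. Your verification of that control matches the derivation the paper gives just before the corollary: the $\kappa$-weighted Cauchy--Schwarz step yields~\eqref{weighted_flux_bound}, the trace inequality at degree $k-1$ cancels against $\tau$, and the sum runs over at most $\eta_0$ faces per element.
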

\begin{lemma}[Discrete coercivity and well-posedness]
\label{lemma:coercivity}
Assume $\alpha_0>2\eta_0$ (equivalently, $\C{0}^2<\tfrac12$ in Corollary~\ref{cor:discrete-bounds}). Then, for every $\epsilon\in\{0,\pm1\}$,
\begin{equation}
\label{coercivity}
\dbilin{a_h}{\dv}{\dv}\geq \C{sta}\,\tnorm{\dv}^2_h,\qquad\forall\dv\in\Vh,
\end{equation}
where
\begin{equation}
\label{coercivity_constants}
\C{sta}\eqbydef\tfrac12\Big(2-\C{0}^{2}-\C{0}\sqrt{\C{0}^{2}+4}\Big)>0.
\end{equation}
The constant $\C{sta}$ is common to the three variants, depends only on $\alpha_{0}$ and $\eta_{0}$ through $\C{0}$, and is independent of $h$, of the polynomial degree and of the diffusion tensor.
Consequently, the discrete problem \eqref{NCIP_formulation} admits a unique solution, which moreover satisfies the stability bound
\begin{equation}
\label{eq:stabbound}
\tnorm{u_h}_h \lsim \C{sta}^{-1}\underline{\kappa}_{0}^{-\ohalf}\,\Vert f \Vert_{0,\Omega}.
\end{equation}
\end{lemma}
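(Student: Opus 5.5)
Take $\du=\dv$ in \eqref{bilinear_NCIP} and write $X\eqbydef\norm{\bkap^{\ohalf}\grdh\dv}{0}{\Th}$, $Y\eqbydef\hdgseminorm{\dv}{\partial}$, $Z\eqbydef\norm{\varrho^{\ohalf}\jump{\bkap\grdh\dv}}{0}{\Fhi}$. By definition of $s_h$ and $\eta_h$,
\[
\dbilin{a_h}{\dv}{\dv}=X^2-\epsilon Z^2-(1+\epsilon)\dualprod{\bkap\grdh\dv\cdot\vect n}{\dv}{0}{\Fhb}+Y^2 .
\]
The boundary factor $1+\epsilon$ lies in $\{0,1,2\}$ and the interior term $-\epsilon Z^2$ is nonnegative unless $\epsilon=1$. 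The worst case is therefore the symmetric variant $\epsilon=1$, where both the anti-penalizing interior term and the doubled consistency term are present; the other cases follow from the same estimate with fewer negative contributions. I will prove a bound that covers all three at once.

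For the interior term I use \eqref{disc_interior_penalty_bound}, which gives $Z^2\le\C{0}^2X^2$. For the boundary term I use \eqref{disc_constency_bound}, which gives $\abs{\dualprod{\bkap\grdh\dv\cdot\vect n}{\dv}{0}{\Fhb}}\le\C{0}XY$. Hence, for $\epsilon=1$,
\[
\dbilin{a_h}{\dv}{\dv}\ge(1-\C{0}^2)X^2-2\C{0}XY+Y^2 .
\]
A sharper bookkeeping, which I expect yields exactly the stated constant, keeps $\fluxseminorm{\dv}$ as the common controlling quantity, using $Z\le\fluxseminorm{\dv}\le\C{0}X$. The quadratic form $\begin{pmatrix}1-\C{0}^2&-\C{0}\\-\C{0}&1\end{pmatrix}$ has smallest eigenvalue
\[
\lambda_{\min}=\tfrac12\Big(2-\C{0}^2-\sqrt{\C{0}^4+4\C{0}^2}\Big)=\tfrac12\Big(2-\C{0}^2-\C{0}\sqrt{\C{0}^2+4}\Big),
\]
which is precisely $\C{sta}$. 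Positivity of $\lambda_{\min}$ is equivalent to a positive determinant, $1-2\C{0}^2>0$, that is $\C{0}^2<\tfrac12$, or $\alpha_0>2\eta_0$. This explains the hypothesis.

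For $\epsilon=0$ and $\epsilon=-1$ the lower bounds $X^2-\C{0}XY+Y^2$ and $X^2+Z^2+Y^2\ge X^2+Y^2$ dominate the $\epsilon=1$ form, since $\C{sta}\le1$. So $\dbilin{a_h}{\dv}{\dv}\ge\C{sta}(X^2+Y^2)=\C{sta}\tnorm{\dv}_h^2$ for all three variants. Independence of $h$, $k$ and $\bkap$ is inherited from $\C{0}=(\eta_0/\alpha_0)^{\ohalf}$, because the degree and diffusion dependence were absorbed into \eqref{penalty_term}.

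Well-posedness follows because $\Vh$ is finite-dimensional and $\tnorm{\cdot}_h$ is a norm on $\Vh$: if $\tnorm{\dv}_h=0$, then $\dv$ is constant and vanishes on $\bnddom$. Coercivity therefore gives injectivity, hence a unique solution. For the stability bound, test with $\du$:
\[
\C{sta}\tnorm{\du}_h^2\le\norm{f}{0}{\domain}\norm{\du}{0}{\domain}.
\]
A Poincaré--Friedrichs inequality for $H^1$ functions with a boundary term, $\norm{v}{0}{\domain}\lsim\norm{\grd v}{0}{\domain}+\norm{v}{0}{\bnddom}$, applies here. On each boundary face, $\tau\ge\alpha_0\C{T}^2\underline{\kappa}_0\abs{\f{}}/\abs{\e{}}\gsim\underline{\kappa}_0 h^{-1}\ge\underline{\kappa}_0$ because $h\le1$. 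This yields $\norm{\du}{0}{\domain}\lsim\underline{\kappa}_0^{-\ohalf}\tnorm{\du}_h$, and dividing gives \eqref{eq:stabbound}.

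The main obstacle is the symmetric case, with its two competing negative terms. There the care lies in the Cauchy--Schwarz and Young splitting, so that the constant comes out exactly as \eqref{coercivity_constants} and not a cruder threshold.
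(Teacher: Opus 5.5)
Your proof is correct and follows the paper's argument essentially step by step. It uses the same expansion of $a_h(\dv,\dv)$ and the same bounds from Corollary~\ref{cor:discrete-bounds} to reduce all three variants to the worst-case form $(1-\C{0}^2)X^2-2\C{0}XY+Y^2$, whose smallest eigenvalue is $\C{sta}$. It then closes the stability estimate with the same generalized Poincar\'e--Friedrichs argument and the lower bound $\tau\gsim\underline{\kappa}_0$ on boundary faces. Two small points: the aside about a ``sharper bookkeeping'' through $\fluxseminorm{\dv}$ is unnecessary, since the direct bound already gives exactly $\C{sta}$, and the $\epsilon=0$ lower bound dominates the $\epsilon=1$ form because $X,Y\ge0$, not because $\C{sta}\le1$.
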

\begin{proof}
Let $\dv\in\Vh$ and set $X\eqbydef\norm{\bkap^{\ohalf}\grdh\dv}{0}{\Th}$ and $Y\eqbydef\hdgseminorm{\dv}{\partial}$. Using the definitions of $s_h$ and $\eta_h$, with $\norm{\tau^{\ohalf}\dv}{0}{\Fhb}^2=Y^2$, gives
\begin{equation*}
\dbilin{a_h}{\dv}{\dv}=X^2-\epsilon\,\norm{\varrho^{\ohalf}\jump{\bkap\grdh\dv}}{0}{\Fhi}^2-(1+\epsilon)\dualprod{\bkap\grdh\dv\cdot\vect{n}}{\dv}{0}{\Fhb}+Y^2.
\end{equation*}
If $\epsilon=-1$, the second term is nonnegative and the third vanishes, so $\dbilin{a_h}{\dv}{\dv}\geq X^2+Y^2=\tnorm{\dv}_h^2$ for any $\alpha_0>0$. If $\epsilon\in\{0,+1\}$, the discrete interface bounds of Corollary~\ref{cor:discrete-bounds} yield
\begin{equation*}
\dbilin{a_h}{\dv}{\dv}\geq\big(1-\C{0}^2\abs{\epsilon}\big)X^2-(1+\epsilon)\,\C{0}\,XY+Y^2.
\end{equation*}
Since $X,Y\geq0$, all three variants are bounded below by $(1-\C{0}^2)X^2-2\C{0}XY+Y^2$. The smallest eigenvalue of this quadratic form is $\C{sta}$ in~\eqref{coercivity_constants}, which is positive if and only if $\C{0}^2<\tfrac12$, that is, $\alpha_0>2\eta_0$. This proves~\eqref{coercivity}.

Existence and uniqueness follow since \eqref{NCIP_formulation} is a coercive, linear, finite-dimensional problem. Finally, we prove the stability bound \eqref{eq:stabbound}. Since the Dirichlet condition is only weakly enforced, $\Vh\not\subset\hilbert{1}{\domain}{0}$, and the Poincar\'e inequality on $\hilbert{1}{\domain}{0}$ does not apply to $\du$. We invoke instead the generalized Poincar\'e--Friedrichs inequality on $\hilbert{1}{\domain}{}$ (see, \eg~\cite{ErnGuermond2004}): there exists $\C{PF}>0$, depending only on $\domain$, such that $\norm{v}{0}{\domain}\leq\C{PF}\big(\norm{\grd v}{0}{\domain}+\norm{v}{0}{\bnddom}\big)$ for all $v\in\hilbert{1}{\domain}{}$, and control each right-hand side term by the energy norm \eqref{nrj_norm1}. The gradient term satisfies $\norm{\grd\du}{0}{\domain}\leq\underline{\kappa}_{0}^{-\ohalf}\norm{\bkap^{\ohalf}\grdh\du}{0}{\Th}$. For the boundary term, on each boundary face $\f{}\subset\bnd{\e{}}\cap\bnddom$, the penalty in the form~\eqref{penalty_term} gives $\norm{\du}{0}{\f{}}=\tau^{-\ohalf}\hdgseminorm{\du}{\partial,\f{}}=(\alpha_0\C{T}^2)^{-\ohalf}\,\big(\abs{\e{}}/(\abs{\f{}}\kappa_{\e{},\f{}})\big)^{\ohalf}\hdgseminorm{\du}{\partial,\f{}}$; since $\kappa_{\e{},\f{}}\geq\underline{\kappa}_{\e{}}\geq\underline{\kappa}_{0}$ and $\abs{\e{}}/\abs{\f{}}\lsim h_{\e{}}\leq h\leq1$ by shape-regularity, squaring and summing over $\Fhb$ yields $\norm{\du}{0}{\bnddom}\leq(\alpha_0\C{T}^2)^{-\ohalf}\,\underline{\kappa}_{0}^{-\ohalf}\hdgseminorm{\du}{\partial}$. Collecting these two bounds, we infer that $\norm{\du}{0}{\domain}\leq\sqrt2\,\C{PF}\max(1,(\alpha_0\C{T}^2)^{-\ohalf})\,\underline{\kappa}_{0}^{-\ohalf}\tnorm{\du}_{h}$, whence $\dlin{l}{\du}=(f,\du)_{0,\domain}\lsim\underline{\kappa}_{0}^{-\ohalf}\norm{f}{0}{\domain}\tnorm{\du}_{h}$, the hidden constant depending on $\domain$ and, through $(\alpha_0\C{T}^2)^{-\ohalf}$, on the penalty parameter $\alpha_0$, but neither on $h$ nor on $\bkap$. Combining this bound with the coercivity \eqref{coercivity} yields \eqref{eq:stabbound}.
\end{proof}
\begin{remark}[Per-variant penalty thresholds]
\label{rmk:thresholds}
The same proof gives the sharper lower-bound coefficients $1$ for $\epsilon=-1$ and $1-\C{0}/2$ for $\epsilon=0$. Thus the sufficient thresholds on the penalty parameter are
\begin{equation}
\label{thresholds}
\epsilon=-1:\ \alpha_0>0,\qquad \epsilon=0:\ \alpha_0>\tfrac{\eta_0}{4},\qquad \epsilon=+1:\ \alpha_0>2\eta_0,
\end{equation}
These thresholds recover unconditional coercivity for N-IPN, as in non-symmetric Nitsche methods~\cite{FreundStenberg1995,Burman2012}; on simplices, the S-IPN condition is $\alpha_0>2(d+1)$. Here ``unconditional'' means that no positive stability threshold is required: the boundary control in $\hdgseminorm{\cdot}{\partial}$ still weakens as $\alpha_0\to0$.
\end{remark}
\begin{remark}[Relaxed coercivity in the conforming setting]
\label{rmk:coercivity-conforming}
For the $\HH{1}_{0}$-conforming variant~\eqref{CCIP_formulation} of Proposition~\ref{prop:conforming}, the boundary mechanism $\eta_{h}$ automatically vanishes and the natural energy-norm reduces to $\tnorm{\dv}_{h}=\norm{\bkap^{\ohalf}\grdh\dv}{0}{\Th}$ on $V_{h,0}$. The coercivity conditions of Lemma~\ref{lemma:coercivity} then relax to
\begin{equation*}
\dbilin{a^{\mathfrak c}_h}{\dv}{\dv}\geq\begin{cases}
\norm{\bkap^{\ohalf}\grdh\dv}{0}{\Th}^2 & \textrm{if }\epsilon\in\{0,-1\}\ \textrm{(unconditionally)},\\[2pt]
(1-\eta_0/\alpha_0)\,\norm{\bkap^{\ohalf}\grdh\dv}{0}{\Th}^2 & \textrm{if }\epsilon=+1,\ \textrm{provided }\alpha_0>\eta_0,
\end{cases}
\end{equation*}
the anti-penalized CIP threshold $\alpha_0>\eta_0$ ($\epsilon=+1$) being a genuine relaxation, by a factor two, of the IPN condition $\alpha_0>2\eta_0$ of Lemma~\ref{lemma:coercivity}.
\end{remark}

\subsection{Convergence analysis of the native discrete variable}

We now assume the natural piecewise regularity for the exact solution of \eqref{weak_form}, \ie $u\in V_{s}$. We further equip $V(h)$ with the enriched energy-norm defined, for all $v\in V(h)$, by
\begin{equation}
    \label{nrj_norm2}
    \tnorm{v}_{h,\ast}^2\eqbydef\tnorm{v}^2_{h}+\fluxseminorm{v}^2,
\end{equation}
where $\fluxseminorm{\cdot}$ is the augmented seminorm of the diffusive-flux traces introduced in~\eqref{aug_seminorm}. Let us emphasize that both respective norms $\tnorm{\cdot}_{h}$ and $\tnorm{\cdot}_{h,\ast}$ are uniformly equivalent on $\Vh$; \ie for all $\dv\in\Vh$ then one may take $\C{\ast}=(1+\C{0}^2)^{-\ohalf}$ in
\begin{equation}
    \label{norm_equivalence}
    \C{\ast}\tnorm{\dv}_{h,\ast}\leq\tnorm{\dv}_{h}\leq\tnorm{\dv}_{h,\ast}.
\end{equation}
The upper bound of \eqref{norm_equivalence} is immediate, while its lower bound follows from~\eqref{discrete-trace-control}. %

\begin{lemma}[Strong consistency] 
\label{lemma:consistency}
Let $u\in V_{s}$ be the exact solution of \eqref{weak_form}. For all $\dv\in\Vh{}$, the following holds:
\begin{equation}
    \label{consistency}
    \dbilin{a_{h}}{u}{\dv}=\inerprod{f}{\dv}{0}{\Th}.%
\end{equation}
\end{lemma}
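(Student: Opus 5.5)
The plan is to evaluate each of the three pieces of $a_h(u,\dv)$ in~\eqref{bilinear_NCIP} separately, using only the regularity carried by $V_s$. The decisive properties are that $u\in\hilbert{1}{\domain}{0}$, so $u=0$ on $\bnddom$, and that $\bkap\grd u\in\boldsymbol{H}(\mathrm{div},\domain)$. Since $s>\ohalf$, the elementwise traces of $\bkap\grdh u$ on faces lie in $\LL{2}$, so every face integral is well defined. This is exactly why $u\in V_s\subset V(h)$ was required.

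\textbf{Interior penalty term.} Because $\bkap\grd u\in\boldsymbol{H}(\mathrm{div},\domain)$ and its traces are in $\LL{2}$, the normal flux is single-valued across every interior face. Hence $\jump{\bkap\grdh u}=0$ a.e.\ on $\Fhi$, and $s_h(u,\dv)=0$ for all $\dv$.

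\textbf{Nitsche boundary term.} Since $u\vert_{\bnddom}=0$, the terms $-\epsilon\dualprod{\bkap\grdh\dv\cdot\vect n}{u}{0}{\Fhb}$ and $\dualprod{\tau u}{\dv}{0}{\Fhb}$ both vanish. Only $-\dualprod{\bkap\grdh u\cdot\vect n}{\dv}{0}{\Fhb}$ survives, so
\[
a_h(u,\dv)=\inerprod{\bkap\grd u}{\grd\dv}{0}{\domain}-\dualprod{\bkap\grd u\cdot\vect n}{\dv}{0}{\Fhb}.
\]

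\textbf{Volume term.} Integrate by parts elementwise, using $-\dvg(\bkap\grd u)=f$, which holds in $\LL{2}$ on each element. This gives
\[
\inerprod{\bkap\grd u}{\grd\dv}{0}{\Th}=\inerprod{f}{\dv}{0}{\Th}+\dualprod{\bkap\grdh u\cdot\vect n}{\dv}{0}{\bnd{\Th}}.
\]
We then split the face sum with Lemma~\ref{lemma:identity_rel}, taking $\vect b=\bkap\grdh u$ and pairing with $(\dv,0)$; alternatively, the same regrouping can be done directly. Since $\dv$ is single-valued, the face sum becomes $\dualprod{\jump{\bkap\grdh u}}{\dv}{0}{\Fhi}+\dualprod{\bkap\grd u\cdot\vect n}{\dv}{0}{\Fhb}$. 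The interior part is zero by the first step, and the boundary part cancels exactly the surviving Nitsche term. This yields~\eqref{consistency}.

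\textbf{Main obstacle.} The delicate point is justifying the elementwise integration by parts and the single-valuedness of the normal flux at the available regularity. The identity $\jump{\bkap\grdh u}=0$ first holds in $H^{-1/2}$. The traces being in $\LL{2}$ (from $\grd u\vert_{\e{}}\in\hilbert{s}{\e{}}{}$, $s>\ohalf$) upgrades it to a.e.\ equality on faces. I would argue this via the Green formula in $\boldsymbol{H}(\mathrm{div},\e{})$ on each element, tested against smooth functions supported near a single face.
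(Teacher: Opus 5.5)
Your proposal is correct and is essentially the paper's proof. Both arguments discard $s_h$ by normal-flux continuity and the $\epsilon$- and $\tau$-terms of $\eta_h$ by $u=0$ on $\bnddom$, then cancel the surviving boundary flux term against the face terms of an elementwise integration by parts; the paper simply regroups first, rewriting $\dbilin{\eta_{h}}{u}{\dv}$ as $-\dualprod{\bkap\grd u\cdot\vect{n}}{\dv}{0}{\bnd{\Th}}$, and integrates by parts afterwards. One small point: Lemma~\ref{lemma:identity_rel} is stated for $\vect{b}\in[\hilbert{1}{\Th}{}]^d$, which $\bkap\grdh u$ need not satisfy when $s<1$, so your ``direct regrouping'' alternative, which only needs $\LL{2}$ face traces, is the cleaner justification.
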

\begin{proof}
Since $u\in\hilbert{1}{\domain}{0}$, its trace vanishes on $\bnd{\domain}$, so that $\dbilin{\eta_{h}}{u}{\dv}=\dualprod{-\bkap\grdh u\cdot\vect{n}}{\dv}{0}{\Fhb}$. Moreover, since $\bkap\grd u\in\boldsymbol{H}(\mathrm{div},\domain)$, the normal diffusive flux is single-valued across interior interfaces, \ie $\jump{\bkap\grd u}=0$ on $\Fhi$. We then deduce that (i) the IP contribution is null, \ie $\dbilin{s_{h}}{u}{\dv}=0$ and (ii) the Nitsche term can be rewritten as follows, \ie $\dbilin{\eta_{h}}{u}{\dv}=-\dualprod{\bkap\grd u\cdot\vect{n}}{\dv}{0}{\bnd{\Th}}.$
Combining these results, we finally infer that 
\begin{align*}
    \dbilin{a_h}{u}{\dv}&=\sumT\bigg[\inerprod{\bkap\grd u}{\grdh\dv}{0}{\e{}}-\dualprod{\bkap\grd u\cdot\vect{n}}{\dv}{0}{\bnd{\e{}}}\bigg],\\
    &=\sumT\inerprod{\grd\cdot(-\bkap\grdh u)}{\dv}{0}{\e{}}=\sumT\inerprod{f}{\dv}{0}{\e{}},
\end{align*}
which is the statement of the lemma.
\end{proof}

\begin{remark}[Exact consistency of the interior mechanism]
\label{rmk:exact-consistency}
The normal continuity of the exact diffusive flux implies
\begin{equation*}
    \dbilin{s_{h}}{u}{\dv}=0,\qquad\forall\dv\in\Vh.
\end{equation*}
Hence the interior stabilization contributes no residual to the Galerkin orthogonality relation~\eqref{galerkin_ortho}.
\end{remark}
An immediate consequence of Lemma \ref{lemma:consistency} is the Galerkin orthogonality property; \ie 
\begin{equation}
    \label{galerkin_ortho}
    \dbilin{a_{h}}{u-\du}{\dv}=0,\quad\forall\dv\in\Vh{}.
\end{equation}
\begin{lemma}[Discrete boundedness] 
\label{lemma:boundedness}
For all $(v,\dw)\in V(h)\times\Vh$,
    \begin{equation}
        \label{boundedness}
        \abs{\dbilin{a_h}{v}{\dw}}\leq\C{b}\,\tnorm{v}_{h,\ast}\tnorm{\dw}_{h},
        \qquad\C{b}\eqbydef2(1+\abs{\epsilon}\C{0}).
    \end{equation}
The constant $\C{b}$ is independent of $h$ and of the diffusion tensor, and satisfies
$\C{b}\leq2(1+\C{0})$ uniformly in $\epsilon\in\{0,\pm1\}$.
\end{lemma}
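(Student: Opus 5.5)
The plan is to expand $a_h(v,\dw)$ into its five constituent pieces and bound each by Cauchy--Schwarz, placing every flux trace of $v$ (which is not discrete) on the $\tnorm{\cdot}_{h,\ast}$ side and every flux trace of $\dw$ (which is discrete) on the $\tnorm{\cdot}_h$ side via Corollary~\ref{cor:discrete-bounds}. Concretely,
\begin{equation*}
a_h(v,\dw)=\inerprod{\bkap\grdh v}{\grdh\dw}{0}{\Th}
-\epsilon\dualprod{\varrho\jump{\bkap\grdh v}}{\jump{\bkap\grdh\dw}}{0}{\Fhi}
-\dualprod{\bkap\grdh v\cdot\vect n}{\dw}{0}{\Fhb}
-\epsilon\dualprod{\bkap\grdh\dw\cdot\vect n}{v}{0}{\Fhb}
+\dualprod{\tau v}{\dw}{0}{\Fhb}.
\end{equation*}
Write $X_v=\norm{\bkap^{\ohalf}\grdh v}{0}{\Th}$, $Y_v=\hdgseminorm{v}{\partial}$, $Z_v=\fluxseminorm{v}$, and similarly $X_w,Y_w$ for $\dw$.

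The terms are bounded as follows:
\begin{itemize}
\item The volume term is at most $X_vX_w$ by Cauchy--Schwarz with the $\bkap^{\ohalf}$ weight.
\item For the interior penalty term, apply Cauchy--Schwarz in the $\varrho$-weighted $\LL{2}(\Fhi)$ product. Then use \eqref{interior_penalty_bound} for $v$ (giving $Z_v$) and \eqref{disc_interior_penalty_bound} for $\dw$ (giving $\C{0}X_w$). This yields $\abs{\epsilon}\C{0}Z_vX_w$.
\item The first boundary consistency term is at most $Z_vY_w$ by \eqref{constency_bound}.
\item The adjoint boundary term is at most $\abs{\epsilon}\C{0}X_wY_v$ by \eqref{disc_constency_bound}, with $w\eqbydef v\in V(h)$. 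This is legitimate since that corollary allows a general second argument in $V(h)$.
\item The penalty term is at most $Y_vY_w$ by weighted Cauchy--Schwarz.
\end{itemize}

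Summing these bounds gives
\begin{equation*}
\abs{a_h(v,\dw)}\le X_w\big(X_v+\abs{\epsilon}\C{0}(Z_v+Y_v)\big)+Y_w\big(Z_v+Y_v\big).
\end{equation*}
To conclude, apply the discrete Cauchy--Schwarz inequality in $\R^2$ with $(X_w,Y_w)$, whose Euclidean norm is $\tnorm{\dw}_h$. Each bracket is controlled using $X_v+Y_v+Z_v\le\sqrt3\,\tnorm{v}_{h,\ast}$ and $Z_v+Y_v\le\sqrt2\,\tnorm{v}_{h,\ast}$. One generous but clean option is to bound both brackets by $(1+\abs{\epsilon}\C{0})(X_v+Y_v+Z_v)$, which gives a constant $\sqrt2\cdot\sqrt3\,(1+\abs{\epsilon}\C{0})$.

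The main obstacle is matching the stated constant $\C{b}=2(1+\abs{\epsilon}\C{0})$ exactly, since the crude bound above produces $\sqrt6(1+\abs{\epsilon}\C{0})$. The bookkeeping therefore has to be sharpened. The first route to try is to group the bound as $(1,\abs{\epsilon}\C{0},1)\cdot(\ldots)$ and, using the Euclidean structure, write the total as
\begin{equation*}
(X_vX_w+Y_vY_w)+Z_vY_w+\abs{\epsilon}\C{0}(Z_vX_w+Y_vX_w).
\end{equation*}
Each of the three groups is at most $\tnorm{v}_{h,\ast}\tnorm{\dw}_h$, except the last, which is at most $\sqrt2\,\tnorm{v}_{h,\ast}\tnorm{\dw}_h$. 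This gives the constant $2+\sqrt2\,\abs{\epsilon}\C{0}\le2(1+\abs{\epsilon}\C{0})$, as required. The uniform bound $\C{b}\le2(1+\C{0})$ then follows from $\abs{\epsilon}\le1$. Independence of $h$ and $\bkap$ holds because $\C{0}=(\eta_0/\alpha_0)^{\ohalf}$.
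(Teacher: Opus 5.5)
Your proof is correct and is essentially the paper's argument. It uses the same term-by-term bounds via \eqref{constency_bound}, \eqref{disc_constency_bound}, \eqref{interior_penalty_bound} and \eqref{disc_interior_penalty_bound}, groups the volume and boundary-penalty terms into $\tnorm{v}_h\tnorm{\dw}_h$ in the same way, and its explicit bookkeeping of the constant $2+\sqrt{2}\,\abs{\epsilon}\C{0}\le\C{b}$ spells out the final inequality that the paper states without detail.
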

\begin{proof}
The volume and boundary-penalty terms are bounded together by
$\tnorm{v}_h\tnorm{\dw}_h$ using the Cauchy--Schwarz inequality.
For the remaining terms, apply~\eqref{constency_bound} to $(v,\dw)$,
\eqref{disc_constency_bound} to $(\dw,v)$, and use~\eqref{interior_penalty_bound}
and~\eqref{disc_interior_penalty_bound} to bound the two factors in $s_h$. This gives
\begin{equation*}
\begin{aligned}
\abs{\dbilin{a_h}{v}{\dw}}
&\leq\tnorm{v}_h\tnorm{\dw}_h
+\fluxseminorm{v}\hdgseminorm{\dw}{\partial}+\abs{\epsilon}\C{0}
\Big(\hdgseminorm{v}{\partial}+\fluxseminorm{v}\Big)
\norm{\bkap^{\ohalf}\grdh\dw}{0}{\Th}\\
&\leq\C{b}\,\tnorm{v}_{h,\ast}\tnorm{\dw}_h,
\end{aligned}
\end{equation*}
which proves~\eqref{boundedness}.
\end{proof}
\begin{remark}[Extended boundedness]
\label{extended_boundedness}
For all $v,w\in V(h)$,
\begin{equation*}
\abs{\dbilin{a_h}{v}{w}}\leq\C{b}^{\ast}\,\tnorm{v}_{h,\ast}\tnorm{w}_{h,\ast},
\qquad\C{b}^{\ast}\eqbydef2(1+\abs{\epsilon}).
\end{equation*}
The proof follows that of Lemma~\ref{lemma:boundedness}, using the extended interface
bounds of Lemma~\ref{lemma_upper-bounds} for both arguments.
This extension is used in the duality argument of Theorem~\ref{thm:L2-estimates}.
\end{remark}
We now derive (i) an energy error estimate, through the standard coercivity--boundedness argument for strongly consistent nonconforming methods (see, \eg~\cite{di2011mathematical}), and (ii) a sharper $\LL{2}$-error estimate by an Aubin--Nitsche duality argument.
\begin{lemma}[Local approximation in the enriched energy norm]
\label{lemma:local-interpolation}
Let $d\in\{2,3\}$, $\ohalf<\mu\leq k$, and let $v\in\hilbert{1}{\domain}{}$ satisfy $v\vert_{\e{}}\in\hilbert{1+\mu}{\e{}}{}$ for every $\e{}\in\Th$. The degree-$k$ nodal Lagrange interpolant $I_hv\in\Vh$ is well defined, preserves a homogeneous boundary trace, and satisfies
\begin{equation}
\label{local-interpolation}
\tnorm{v-I_hv}_{h,\ast}^{2}\lsim
\sumT\overline{\kappa}_{\e{}}h_{\e{}}^{2\mu}\seminorm{v}{1+\mu}{\e{}}^{2}.
\end{equation}
The constant depends on $k$, $\mu$, $\alpha_0$ and mesh shape-regularity, but not on the diffusion tensors or on $h$.
\end{lemma}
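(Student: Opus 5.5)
We prove the statement in three steps: check that the interpolant is well defined, split the enriched norm into its three contributions, and bound each one elementwise by standard polynomial approximation, with the diffusion and geometric weights tracked explicitly.

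\textbf{Well-posedness of $I_hv$.} Since $\mu>\ohalf$ and $d\le3$, each $v\vert_{\e{}}\in\hilbert{1+\mu}{\e{}}{}$ embeds in $C^0(\overline{\e{}})$, because $1+\mu>d/2$. As $v\in\hilbert{1}{\domain}{}$, its traces on interior faces agree, so the nodal values are single-valued. Hence $I_hv\in\Vh$ is well defined. The interpolant preserves a vanishing boundary trace because the boundary nodes of a face $\f{}\in\Fhb$ determine $I_hv\vert_{\f{}}$. Thus $(v-I_hv)\vert_{\Fhb}=0$ whenever $v\vert_{\bnddom}=0$, and then $\hdgseminorm{v-I_hv}{\partial}=0$.

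\textbf{Bound for the three contributions.} Set $e\eqbydef v-I_hv$.

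The volume term satisfies $\norm{\bkap^{\ohalf}\grdh e}{0}{\e{}}^2\le\overline{\kappa}_{\e{}}\seminorm{e}{1}{\e{}}^2\lsim\overline{\kappa}_{\e{}}h_{\e{}}^{2\mu}\seminorm{v}{1+\mu}{\e{}}^2$. This follows from the Bramble--Hilbert lemma on the reference simplex together with affine scaling, which is valid for fractional $\mu$ since the nodal interpolant is bounded on $\hilbert{1+\mu}{}{}$.

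For the boundary seminorm, in case the statement is read without a vanishing trace, we use the continuous trace inequality $\norm{e}{0}{\f{}}^2\lsim h_{\e{}}^{-1}\norm{e}{0}{\e{}}^2+h_{\e{}}\seminorm{e}{1}{\e{}}^2$. Combined with $\tau\lsim\alpha_0\C{T}^2\overline{\kappa}_{\e{}}h_{\e{}}^{-1}$, which holds since $\abs{\f{}}/\abs{\e{}}\simeq h_{\e{}}^{-1}$, this yields $\overline{\kappa}_{\e{}}h_{\e{}}^{2\mu}\seminorm{v}{1+\mu}{\e{}}^2$.

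For the flux seminorm we start from \eqref{weighted_flux_bound}:
\[
\norm{\tau_{\e{},\f{}}^{-\ohalf}(\bkap\grdh e)\vert_{\e{}}\cdot\vect n}{0}{\f{}}^2\le(\alpha_0\C{T}^2)^{-1}\tfrac{\abs{\e{}}}{\abs{\f{}}}\norm{\bkap^{\ohalf}\grdh e}{0}{\f{}}^2\lsim h_{\e{}}\,\overline{\kappa}_{\e{}}\norm{\grd e}{0}{\f{}}^2 .
\]
We then need a trace estimate for $\grd e\in[\hilbert{\mu}{\e{}}{}]^d$ with $\mu>\ohalf$. Scaling the fractional trace theorem on the reference element gives
\[
h_{\e{}}\norm{\grd e}{0}{\f{}}^2\lsim\seminorm{e}{1}{\e{}}^2+h_{\e{}}^{2\mu}\seminorm{e}{1+\mu}{\e{}}^2 .
\]
Together with $\seminorm{e}{1+\mu}{\e{}}\lsim\seminorm{v}{1+\mu}{\e{}}$ (stability of $I_h$, using that $I_h$ reproduces $\Pk{k}{\e{}}$ when $\mu\le k$), this gives the same elementwise bound. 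Summing over the at most $\eta_0$ faces per element and over $\Th$ proves \eqref{local-interpolation}. The constant depends only on $k$, $\mu$, $\alpha_0$ and shape regularity, since the diffusion enters only through the factor $\overline{\kappa}_{\e{}}$. The cancellation of $\kappa_{\e{},\f{}}$ in $\tau$ against $\bkap$ is absorbed by the bound $\abs{\bkap^{\ohalf}\xi}^2\le\overline{\kappa}_{\e{}}\abs{\xi}^2$.

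\textbf{Main obstacle.} The delicate step is the flux term for fractional $\mu\in(\ohalf,1)$. There the gradient trace has to be handled with the scaled fractional trace inequality, and we must justify that the $\hilbert{1+\mu}{}{}$-seminorm of the interpolation error is controlled by that of $v$ alone, without lower-order terms. We would argue both on the reference element, using Bramble--Hilbert in its fractional (Dupont--Scott) form, and then pull back by affine maps, whose Jacobian bounds are controlled by shape regularity.
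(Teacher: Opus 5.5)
Your proposal is correct and follows essentially the paper's proof. Both establish well-definedness through $1+\mu>d/2$ and $H^1$-conformity, then bound the volume, boundary-penalty and flux contributions elementwise using reference-simplex approximation, polynomial reproduction, affine scaling, $\tau\lsim\overline{\kappa}_{\e{}}h_{\e{}}^{-1}$ and \eqref{weighted_flux_bound}. The only difference is organizational: the paper obtains $\norm{\grd e}{0}{\bnd{\e{}}}\lsim h_{\e{}}^{\mu-1/2}\seminorm{v}{1+\mu}{\e{}}$ in one step on the reference simplex, whereas you factor it through a scaled fractional trace inequality and the $H^{1+\mu}$-seminorm stability of the interpolation error. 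For the boundary-penalty term you also use, without stating it, the bound $\norm{e}{0}{\e{}}\lsim h_{\e{}}^{1+\mu}\seminorm{v}{1+\mu}{\e{}}$, which follows from the same Bramble--Hilbert argument.
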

\begin{proof}
Since $1+\mu>d/2$, each element restriction has a continuous representative. Their traces agree on shared faces because $v\in H^1(\domain)$; continuity then makes the shared nodal values agree. Thus the local Lagrange interpolants assemble into $I_hv\in\Vh$, with zero boundary nodal values whenever $v\in H^1_0(\domain)$.

Write $e_I=v-I_hv$. On the reference simplex, point evaluation is bounded on $H^{1+\mu}$, and the gradient trace is bounded from $H^{1+\mu}$ to $L^2$ of the boundary because $\mu>1/2$. Polynomial reproduction, the polynomial approximation estimate on the reference simplex, and affine scaling therefore give the local bounds (see also~\cite{ErnGuermond2004})
\begin{align*}
\seminorm{e_I}{1}{\e{}}&\lsim h_{\e{}}^\mu\seminorm{v}{1+\mu}{\e{}},\\
\norm{e_I}{0}{\bnd{\e{}}}&\lsim h_{\e{}}^{\mu+1/2}\seminorm{v}{1+\mu}{\e{}},\\
\norm{\grd e_I}{0}{\bnd{\e{}}}&\lsim h_{\e{}}^{\mu-1/2}\seminorm{v}{1+\mu}{\e{}}.
\end{align*}
These statements also hold for fractional $\mu$; the constant is not asserted to remain bounded as $\mu\downarrow1/2$. The elementwise inequality $\bkap_{\e{}}\preceq\overline{\kappa}_{\e{}}\mathrm I$ bounds the volume contribution to $\tnorm{e_I}_{h,\ast}^{2}$ by the right-hand side of~\eqref{local-interpolation}. Shape-regularity and~\eqref{penalty_term} give $\tau\vert_{\e{},\f{}}\lsim\overline{\kappa}_{\e{}}h_{\e{}}^{-1}$, so the second bound controls the boundary penalty contribution. Finally,~\eqref{weighted_flux_bound} gives
\begin{equation*}
\fluxseminorm{e_I}^2\le\frac{1}{\alpha_0\C{T}^2}\sumT\sum_{\f{}\in\FE}\frac{\abs{\e{}}}{\abs{\f{}}}
\norm{\bkap^{\ohalf}\grd e_I}{0}{\f{}}^2
\lsim\sumT\overline{\kappa}_{\e{}}h_{\e{}}
\norm{\grd e_I}{0}{\bnd{\e{}}}^{2}
\lsim\sumT\overline{\kappa}_{\e{}}h_{\e{}}^{2\mu}\seminorm{v}{1+\mu}{\e{}}^{2}.
\end{equation*}
This controls the remaining contribution and proves the assertion. All estimates are elementwise; no averaging across material interfaces is required.
\end{proof}
\begin{theorem}[$\tnorm{\cdot}_{h,\ast}$-norm error estimate]\label{thm:nrj-estimate}
Let $u\in V_s$ solve~\eqref{weak_form} and $\du\in\Vh$ solve~\eqref{NCIP_formulation}. Under the hypotheses of Lemma~\ref{lemma:coercivity}, with $\mu\eqbydef\min(k,s)$,
\begin{equation}
\label{nrj-estimate}
\tnorm{u-\du}_{h,\ast}\lsim
\bigg(\sumT\overline{\kappa}_{\e{}}h_{\e{}}^{2\mu}\seminorm{u}{1+\mu}{\e{}}^2\bigg)^{\ohalf}
\lsim\overline{\kappa}_0^{\ohalf}h^\mu\seminorm{u}{1+\mu}{\Th}.
\end{equation}
The hidden constant is independent of $h$, of the coefficient jumps and of the local anisotropy ratios. The displayed weights and the broken regularity seminorm retain their dependence on the diffusion problem. No quasi-monotonicity assumption on the coefficients is needed.
\end{theorem}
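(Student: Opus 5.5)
The plan is the standard Strang-type argument for strongly consistent methods. It combines the coercivity of Lemma~\ref{lemma:coercivity}, the Galerkin orthogonality~\eqref{galerkin_ortho}, the mixed boundedness of Lemma~\ref{lemma:boundedness}, and the interpolation estimate of Lemma~\ref{lemma:local-interpolation}.

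\textbf{Splitting the error.} With $\mu=\min(k,s)$, take $v=u$ in Lemma~\ref{lemma:local-interpolation}. This is admissible because $u\in V_s\subset\hilbert{1}{\domain}{0}$ and $u\vert_{\e{}}\in\hilbert{1+s}{\e{}}{}\subset\hilbert{1+\mu}{\e{}}{}$, with $\ohalf<\mu\le k$. We write
\[
u-\du=(u-I_hu)+(I_hu-\du)=:e_I+\xi_h,
\]
where $\xi_h\in\Vh$. By the triangle inequality, $\tnorm{u-\du}_{h,\ast}\le\tnorm{e_I}_{h,\ast}+\tnorm{\xi_h}_{h,\ast}$. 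The first term is bounded directly by~\eqref{local-interpolation}.

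\textbf{Bounding the discrete part.} Since $\xi_h\in\Vh$, the norm equivalence~\eqref{norm_equivalence} gives $\tnorm{\xi_h}_{h,\ast}\le\C{\ast}^{-1}\tnorm{\xi_h}_h$. Coercivity~\eqref{coercivity}, Galerkin orthogonality~\eqref{galerkin_ortho} and boundedness~\eqref{boundedness}, the latter applied with $v=e_I\in V(h)$ and $\dw=\xi_h$, then yield
\[
\C{sta}\tnorm{\xi_h}_h^2\le a_h(\xi_h,\xi_h)=a_h(-e_I,\xi_h)\le\C{b}\tnorm{e_I}_{h,\ast}\tnorm{\xi_h}_h .
\]
Hence $\tnorm{\xi_h}_{h,\ast}\le\C{\ast}^{-1}\C{sta}^{-1}\C{b}\,\tnorm{e_I}_{h,\ast}$. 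Here $e_I\in V(h)$ because both $u$ and $I_hu$ belong to $\hilbert{1}{\domain}{}\cap\hilbert{1+\mu}{\Th}{}$.

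It is essential that boundedness is used in the mixed form $\tnorm{\cdot}_{h,\ast}\times\tnorm{\cdot}_h$. The flux seminorm of $e_I$ cannot be controlled by $\tnorm{e_I}_h$; it is absorbed instead by the enriched norm, which Lemma~\ref{lemma:local-interpolation} already controls.

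\textbf{Conclusion.} Combining the two bounds gives $\tnorm{u-\du}_{h,\ast}\le(1+\C{\ast}^{-1}\C{sta}^{-1}\C{b})\tnorm{e_I}_{h,\ast}$, and~\eqref{local-interpolation} yields the first inequality of~\eqref{nrj-estimate}. The second inequality follows from $\overline{\kappa}_{\e{}}\le\overline{\kappa}_0$, $h_{\e{}}\le h$ and the summation $\sumT\seminorm{u}{1+\mu}{\e{}}^2=\seminorm{u}{1+\mu}{\Th}^2$.

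\textbf{Coefficient independence.} The constants $\C{\ast}$, $\C{sta}$ and $\C{b}$ depend only on $\alpha_0$, $\eta_0$ and $\epsilon$. The interpolation constant depends only on $k$, $\mu$, $\alpha_0$ and shape regularity. The hidden constant is therefore independent of $h$ and of $\bkap$, including its jumps and its anisotropy. No quasi-monotonicity assumption enters, because every estimate is elementwise and $I_hu$ involves no averaging across material interfaces.

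\textbf{Main obstacle.} The only delicate point is admissibility: verifying that $e_I$ lies in $V(h)$ and has $\LL{2}$ gradient traces, so that Lemma~\ref{lemma:boundedness} applies. This holds since $\mu>\ohalf$. Beyond that, the argument is routine.
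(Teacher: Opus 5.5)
Your proposal is correct and uses the same argument as the paper. Both apply coercivity, Galerkin orthogonality and the mixed $\tnorm{\cdot}_{h,\ast}\times\tnorm{\cdot}_h$ boundedness to the discrete part of the error, then conclude with the norm equivalence~\eqref{norm_equivalence}, the triangle inequality and Lemma~\ref{lemma:local-interpolation} applied to $I_hu$. The only difference is cosmetic: the paper first states the quasi-optimality bound $\tnorm{u-\du}_{h,\ast}\lsim\inf_{\dv\in\Vh}\tnorm{u-\dv}_{h,\ast}$ before choosing $\dv=I_hu$, whereas you insert $I_hu$ from the start.
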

\begin{proof}
For any $\dv\in\Vh$, discrete coercivity, Galerkin orthogonality~\eqref{galerkin_ortho} and boundedness~\eqref{boundedness} yield
\begin{equation*}
\C{sta}\tnorm{\du-\dv}_h^2
\leq\dbilin{a_h}{\du-\dv}{\du-\dv}
=\dbilin{a_h}{u-\dv}{\du-\dv}
\lsim\tnorm{u-\dv}_{h,\ast}\tnorm{\du-\dv}_h.
\end{equation*}
The discrete norm equivalence~\eqref{norm_equivalence} and the triangle inequality imply
$\tnorm{u-\du}_{h,\ast}\lsim\inf_{\dv\in\Vh}\tnorm{u-\dv}_{h,\ast}$.
Choose $\dv=I_hu$ and apply Lemma~\ref{lemma:local-interpolation}.
\end{proof}
To sharpen this result in the weaker $\LL{2}$-norm, we resort to an Aubin--Nitsche duality argument, which requires a regularity assumption on the dual solution. We assume that $\domain$ admits a \emph{fixed} partition $\mathcal{P}\eqbydef\{\domain_i\}_{1\leq i\leq N}$ into finitely many polyhedral subdomains, resolved by every mesh of the family $\{\Th\}_{h>0}$ and such that the discontinuities of $\bkap$ are aligned with the subdomain boundaries. Let $\er{u}\eqbydef u-\du \in V(h)$ and let $\psi$ solve the weak adjoint problem
\begin{equation}
    \label{weak_adj_prob}
    \dbilin{a}{v}{\psi}=\inerprod{\er{u}}{v}{0}{\domain},\quad\forall v\in\hilbert{1}{\domain}{0}.
\end{equation}
We then assume the \emph{piecewise elliptic regularity property}: $\psi\vert_{\domain_i}\in\hilbert{2}{\domain_i}{}$ for all $\domain_i\in\mathcal{P}$, $\bkap\grd\psi\in\boldsymbol{H}(\mathrm{div},\domain)$ (so that the dual normal flux is single-valued across the interfaces), and there exists $\C{ell}>0$, depending on $\domain$, $\mathcal{P}$ and the distribution of $\bkap$, such that
\begin{equation}
    \label{elliptic_regularity}
    \norm{\psi}{2}{\mathcal{P}}\eqbydef\Big(\norm{\psi}{1}{\domain}^2+\textstyle\sum_{i=1}^{N}\seminorm{\psi}{2}{\domain_i}^2\Big)^{\ohalf}\leq\C{ell}\norm{\er{u}}{0}{\domain}.
\end{equation}
Since the meshes resolve $\mathcal{P}$, this implies $\psi\in\hilbert{2}{\Th}{}$ with $\seminorm{\psi}{2}{\Th}^2=\sum_{i=1}^{N}\seminorm{\psi}{2}{\domain_i}^2\leq\norm{\psi}{2}{\mathcal{P}}^2$. Since the strong form of~\eqref{weak_adj_prob} reads $-\grd\cdot(\bkap\grd\psi)=\er{u}$ in $\domain$ with $\bkap\grd\psi\in\boldsymbol{H}(\mathrm{div},\domain)$, elementwise integration by parts, together with the single-valuedness of $\er{u}$ (as $\er{u}\in\hilbert{1}{\domain}{}$) and of $\bkap\grd\psi\cdot\vect{n}$ across $\Fhi$, gives
\begin{equation*}
\norm{\er{u}}{0}{\domain}^{2}=\dbilin{a}{\er{u}}{\psi}-\dualprod{\bkap\grd\psi\cdot\vect{n}}{\er{u}}{0}{\Fhb}.
\end{equation*}
On the other hand, $\psi=0$ on $\bnd{\domain}$ and $\jump{\bkap\grd\psi}=0$ on $\Fhi$ make the interior-penalty term $\dbilin{s_h}{\er{u}}{\psi}$ and the $\psi$-bearing boundary contributions of $a_h$ vanish, leaving $\dbilin{a_h}{\er{u}}{\psi}=\dbilin{a}{\er{u}}{\psi}-\epsilon\dualprod{\bkap\grd\psi\cdot\vect{n}}{\er{u}}{0}{\Fhb}$. Eliminating $\dbilin{a}{\er{u}}{\psi}$ between these two identities yields
\begin{align}
    \label{L2-error}
    \norm{\er{u}}{0}{\domain}^{2}&=\dbilin{a_{h}}{\er{u}}{\psi}+(\epsilon-1)\dualprod{\bkap\grd\psi\cdot\vect{n}}{\er{u}}{0}{\Fhb}.%
\end{align}
It then remains to bound the two contributions on the right-hand side of \eqref{L2-error}, which leads to the following theorem.
\begin{theorem}[$\LL{2}$-error estimate] 
\label{thm:L2-estimates}
Let $u\in V_{s}$ solve \eqref{weak_form} and $\du\in\Vh$ solve \eqref{NCIP_formulation}. Besides the hypotheses of Lemma~\ref{lemma:coercivity}, assume the piecewise elliptic regularity property~\eqref{elliptic_regularity}. Then, with a hidden constant depending on the piecewise elliptic-regularity constant $\C{ell}$ and independent of $h$; since $\C{ell}$ is in general contrast-dependent, no uniformity with respect to $\bkap$ is claimed (Remark~\ref{rmk:dual-regularity}),
\begin{equation}
    \label{L2-estimate_optimal}
    \norm{u-\du}{0}{\domain}\lsim\overline{\kappa}_{0}^{\ohalf}\big(h+\abs{\epsilon-1}\,h^{\ohalf}\big)\tnorm{u-\du}_{h,\ast},
\end{equation}
the two contributions being compared under the normalization $h\leq1$ of Section~\ref{S:2}. Combining \eqref{L2-estimate_optimal} with \eqref{nrj-estimate} yields
\begin{equation}
    \label{L2-estimate_rate}
    \norm{u-\du}{0}{\domain}\lsim\overline{\kappa}_{0}\big(h^{\mu+1}+\abs{\epsilon-1}\,h^{\mu+\ohalf}\big)\seminorm{u}{\mu+1}{\Th}.
\end{equation}
The rate is optimal if $\epsilon=1$ and admits at most a half-order loss otherwise.
\end{theorem}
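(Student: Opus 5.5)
Starting from the identity \eqref{L2-error}, I bound its two right-hand contributions separately and then divide by $\norm{\er{u}}{0}{\domain}$.

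\emph{The Galerkin term.} Galerkin orthogonality \eqref{galerkin_ortho} allows a discrete function to be subtracted from the second argument. The orthogonality is stated in the form $\dbilin{a_h}{\er{u}}{\dv}=0$, with the discrete function in the second slot, so I write
\[
\dbilin{a_h}{\er{u}}{\psi}=\dbilin{a_h}{\er{u}}{\psi-I_h\psi},
\]
where $I_h\psi\in\Vh$ is the Lagrange interpolant of Lemma~\ref{lemma:local-interpolation} with $\mu=1$ (admissible since $k\ge1$). Since $\psi\in H^1_0$, $I_h\psi$ also vanishes on $\bnddom$. Neither argument is discrete, so I apply the extended boundedness of Remark~\ref{extended_boundedness}:
\[
\abs{\dbilin{a_h}{\er{u}}{\psi}}\le\C{b}^\ast\tnorm{\er{u}}_{h,\ast}\tnorm{\psi-I_h\psi}_{h,\ast}.
\]
Lemma~\ref{lemma:local-interpolation} with $\mu=1$ then gives $\tnorm{\psi-I_h\psi}_{h,\ast}\lsim\overline{\kappa}_0^{\ohalf}h\seminorm{\psi}{2}{\Th}$. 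Because the meshes resolve $\mathcal{P}$, we have $\seminorm{\psi}{2}{\Th}\le\norm{\psi}{2}{\mathcal{P}}\le\C{ell}\norm{\er{u}}{0}{\domain}$ by \eqref{elliptic_regularity}. This produces the $\overline{\kappa}_0^{\ohalf}h$ part of \eqref{L2-estimate_optimal}.

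\emph{The boundary term.} The term $(\epsilon-1)\dualprod{\bkap\grd\psi\cdot\vect n}{\er{u}}{0}{\Fhb}$ vanishes for $\epsilon=1$; otherwise I estimate it by Cauchy--Schwarz on $\Fhb$ with weights $\tau^{\mp\ohalf}$:
\[
\abs{\dualprod{\bkap\grd\psi\cdot\vect n}{\er{u}}{0}{\Fhb}}\le\Big(\sum_{\f{}\in\Fhb}\norm{\tau^{-\ohalf}\bkap\grd\psi\cdot\vect n}{0}{\f{}}^2\Big)^{\ohalf}\hdgseminorm{\er{u}}{\partial}.
\]
The second factor is bounded by $\tnorm{\er{u}}_{h,\ast}$. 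For the first factor, $\tau\gsim\kappa_{\e{},\f{}}h_{\e{}}^{-1}$ and $\abs{\bkap\grd\psi\cdot\vect n}^2\le\kappa_{\e{},\f{}}\abs{\bkap^{\ohalf}\grd\psi}^2$ give
\[
\norm{\tau^{-\ohalf}\bkap\grd\psi\cdot\vect n}{0}{\f{}}^2\lsim\overline{\kappa}_0\,h_{\e{}}\norm{\grd\psi}{0}{\f{}}^2 .
\]
This is the step I expect to be the main obstacle. A naive elementwise continuous trace inequality,
\[
\norm{\grd\psi}{0}{\f{}}^2\lsim h_{\e{}}^{-1}\norm{\grd\psi}{0}{\e{}}^2+h_{\e{}}\seminorm{\psi}{2}{\e{}}^2,
\]
only gives $O(1)$ after multiplying by $h_{\e{}}$, not the needed $h^{\ohalf}$. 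Instead I would use a global argument: summing over boundary faces, $\sum_{\f{}\in\Fhb}h_{\e{}}\norm{\grd\psi}{0}{\f{}}^2\le h\norm{\grd\psi}{0}{\bnddom}^2$. The global trace inequality applied on each subdomain $\domain_i$ (which are fixed and $h$-independent) gives $\norm{\grd\psi}{0}{\bnddom}\lsim\norm{\psi}{2}{\mathcal{P}}\le\C{ell}\norm{\er{u}}{0}{\domain}$. The first factor is therefore $\lsim\overline{\kappa}_0^{\ohalf}h^{\ohalf}\C{ell}\norm{\er{u}}{0}{\domain}$, with the factor $\abs{\epsilon-1}$ carried along. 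This yields the $\abs{\epsilon-1}h^{\ohalf}$ part of \eqref{L2-estimate_optimal}.

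\emph{Conclusion.} Summing the two bounds and dividing by $\norm{\er{u}}{0}{\domain}$ gives \eqref{L2-estimate_optimal}. Inserting the energy estimate \eqref{nrj-estimate} of Theorem~\ref{thm:nrj-estimate} then yields \eqref{L2-estimate_rate}. The optimality statement follows from the vanishing of the factor $\abs{\epsilon-1}$ at $\epsilon=1$, with the comparison of $h$ and $h^{\ohalf}$ made under the normalization $h\le1$.
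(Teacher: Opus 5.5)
Your proposal is correct and follows the paper's proof essentially step by step. For the consistency term you use Galerkin orthogonality with the Lagrange interpolant $I_h\psi$, the extended boundedness, and the interpolation estimate with $\mu=1$; for the boundary residual you use a $\tau^{\pm1/2}$-weighted Cauchy--Schwarz bound together with the subdomain-wise continuous trace theorem on the fixed partition $\mathcal{P}$. Your remark that an elementwise trace inequality would only give $O(1)$ usefully explains why the paper needs the global trace on $\mathcal{P}$.
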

\begin{proof}
 Let us bound the first term on the right-hand side of \eqref{L2-error}. Set $\proj{\psi}\eqbydef I_h\psi$, using Lemma~\ref{lemma:local-interpolation} with $\mu=1$. Observing that $\proj{\psi}\in\Vh$, we infer that $\dbilin{a_{h}}{\er{u}}{\proj{\psi}}=0$ from the Galerkin orthogonality \eqref{galerkin_ortho} and hence $\dbilin{a_{h}}{\er{u}}{\psi}=\dbilin{a_{h}}{\er{u}}{\erp{\psi}{\pi}}$ where $\erp{\psi}{\pi}\eqbydef\psi-\proj{\psi}$. Now, using the extended boundedness of $a_h$ on $V(h)\times V(h)$, \ie $\abs{\dbilin{a_{h}}{\er{u}}{\erp{\psi}{\pi}}}\lsim\tnorm{\er{u}}_{h,\ast}\tnorm{\erp{\psi}{\pi}}_{h,\ast}$, the local interpolation estimate~\eqref{local-interpolation} in the enriched energy norm, \ie $\tnorm{\erp{\psi}{\pi}}_{h,\ast}\lsim\overline{\kappa}_{0}^{\ohalf}h\seminorm{\psi}{2}{\Th}$ (the global factor $\overline{\kappa}_{0}^{\ohalf}$ bounding all the diffusion weights entering $\tnorm{\cdot}_{h,\ast}$), and the elliptic regularity property \eqref{elliptic_regularity}, \ie $\seminorm{\psi}{2}{\Th}\leq\norm{\psi}{2}{\mathcal{P}}\leq\C{ell}\norm{\er{u}}{0}{\domain}$, we then obtain the following upper bound,
 \begin{equation}
         \abs{\dbilin{a_{h}}{\er{u}}{\psi}}\lsim\overline{\kappa}_{0}^{\ohalf}h\tnorm{\er{u}}_{h,\ast}\norm{\er{u}}{0}{\domain}.
 \end{equation}
 For the second term, the Cauchy--Schwarz inequality with the weight $\tau^{\ohalf}$ on each boundary face, together with $\tau^{-\ohalf}=(\alpha_0\C{T}^2)^{-\ohalf}\big(\abs{\e{}}/(\abs{\f{}}\kappa_{\e{},\f{}})\big)^{\ohalf}$, $\norm{\bkap\grd\psi\cdot\vect{n}}{0}{\f{}}\leq\kappa_{\e{},\f{}}^{\ohalf}\norm{\bkap^{\ohalf}\grd\psi}{0}{\f{}}$ and $\abs{\e{}}/\abs{\f{}}\lsim h$, gives
\begin{equation*}
    \abs{\dualprod{\bkap\grdh\psi\cdot\vect{n}}{\er{u}}{0}{\Fhb}}\leq(\alpha_0\C{T}^2)^{-\ohalf}h^{\ohalf}\norm{\bkap^{\ohalf}\grd\psi}{0}{\bnd{\domain}}\hdgseminorm{\er{u}}{\partial}\leq(\alpha_0\C{T}^2)^{-\ohalf}(\overline{\kappa}_{0}h)^{\ohalf}\norm{\grd\psi}{0}{\bnd{\domain}}\hdgseminorm{\er{u}}{\partial}.
\end{equation*}
Since the partition $\mathcal{P}$ is fixed (independent of $h$) and $\psi\vert_{\domain_i}\in\hilbert{2}{\domain_i}{}$, the continuous trace theorem, applied on each subdomain $\domain_i$ abutting $\bnddom$, yields $\norm{\grd\psi}{0}{\bnd{\domain}}\leq\C{ct}\norm{\psi}{2}{\mathcal{P}}$ with $\C{ct}$ depending only on $\mathcal{P}$. Combining this with $\hdgseminorm{\er{u}}{\partial}\leq\tnorm{\er{u}}_{h,\ast}$ and the elliptic regularity \eqref{elliptic_regularity}, we obtain
\begin{equation}
    \abs{\dualprod{\bkap\grd\psi\cdot\vect{n}}{\er{u}}{0}{\Fhb}}\lsim(\overline{\kappa}_{0}h)^{\ohalf}\tnorm{\er{u}}_{h,\ast}\norm{\er{u}}{0}{\domain}.
\end{equation}
Collecting the above estimates yields \eqref{L2-estimate_optimal}, and \eqref{nrj-estimate} then gives \eqref{L2-estimate_rate}. The second term of \eqref{L2-error} vanishes for $\epsilon=1$, whence the optimality.
\end{proof}
\begin{remark}[Scope of the elliptic-regularity assumption]
\label{rmk:dual-regularity}
The piecewise elliptic regularity~\eqref{elliptic_regularity} is a genuine assumption. For a general discontinuous $\bkap$ it may fail: the dual solution then belongs only to $\hilbert{1+\gamma}{\domain_i}{}$ piecewise, with $\gamma\in(0,1)$ determined by the interface configuration and the coefficient jumps (for checkerboard-type media, $\gamma$ can be made arbitrarily small by increasing the contrast~\cite{Kellogg1975}), and the constant $\C{ell}$ is in general not uniform in the contrast. In that regime, Theorem~\ref{thm:L2-estimates} does not apply. A fractional-regularity duality argument would be needed to quantify the reduced gain; when $\gamma\leq1/2$, even the $\LL{2}$ gradient traces used here require a different treatment.
\end{remark}
\begin{remark}[Optimal $\LL{2}$-convergence in the conforming setting]
\label{rmk:L2-conforming}
The half-order loss for $\epsilon\neq 1$ is entirely caused by the boundary residual $(\epsilon-1)\dualprod{\bkap\grd\psi\cdot\vect{n}}{\er{u}}{0}{\Fhb}$ in \eqref{L2-error}, which stems from the weak (nonconforming) enforcement of the Dirichlet condition through $\eta_h$. As discussed in the conforming variant \eqref{CCIP_formulation}, this term vanishes identically when $\du$ is sought in $V_{h,0}\subset\hilbert{1}{\domain}{0}$, so that the optimal estimate $\norm{u-\du}{0}{\domain}\lsim \overline{\kappa}_{0}\,h^{\mu+1}\seminorm{u}{\mu+1}{\Th}$ holds for the three variants $\epsilon\in\{0,\pm1\}$ under the same piecewise elliptic regularity assumption~\eqref{elliptic_regularity}, and not only for the symmetric one.
\end{remark}

\begin{remark}[Coupled interior and boundary mechanisms]
\label{rmk:single-epsilon}
The closed-form trace elimination propagates the same symmetrization parameter $\epsilon$ to the interior flux-jump term and the boundary Nitsche terms. Consequently, $\epsilon=-1$ yields unconditional coercivity but loses adjoint consistency, whereas $\epsilon=1$ ensures adjoint consistency and the optimal $\LL{2}$ estimate under the stated regularity assumption, subject to the stronger penalty threshold. This coupling is inherited from the auxiliary hybrid formulation.
\end{remark}

\section{Numerical experiments}
\label{S:5}

We assess the energy estimates of Section~\ref{S:4} and explore $\LL{2}$ convergence on two benchmarks, reusing the test definitions of~\cite{EtangsaleUIPDG2026}: a heterogeneous anisotropic problem with a smooth manufactured solution (Section~\ref{S:5.1}), assessing the optimal convergence rates and their robustness with respect to the diffusion contrast; and the Kellogg problem with a singular solution (Section~\ref{S:5.2}), probing the regularity-limited regime. A third study closes the section: the conforming variant and the role of the transmissibility weighting of its interior penalty (Section~\ref{S:5.3}). Computations use the \texttt{NGSolve} library~\cite{Schoberl2014c++} on shape-regular triangular meshes conforming to the material interfaces (the numerical study is thus two-dimensional, while the analysis of Section~\ref{S:4} covers $d\in\{2,3\}$); the penalty is~\eqref{penalty_term} with~\eqref{CT_value} and, unless stated otherwise, $\alpha_0=4(d+1)$, which exceeds the sufficient S-IPN threshold $2(d+1)$ of Lemma~\ref{lemma:coercivity}. We report the three variants S-IPN ($\epsilon=1$), N-IPN ($\epsilon=-1$) and I-IPN ($\epsilon=0$).

\subsection{Test 1: heterogeneous anisotropic diffusion with a smooth solution}
\label{S:5.1}

We consider the homogeneous Dirichlet problem on $\domain=(0,1)^2$, split into four subdomains $\domain_i$, $i=1,\dots,4$, numbered counterclockwise from the bottom-left and separated by the interfaces $x=\tfrac12$ and $y=\tfrac12$ (see Figure~\ref{fig:test1-geometry}). The diffusion tensor is piecewise constant, $\bkap\vert_{\domain_i}=\bkap_i$, with
\begin{equation}
\label{test1-tensor}
\bkap_1=\bkap_3=\begin{bmatrix}\lambda&0\\0&1\end{bmatrix}\quad\textrm{and}\quad\bkap_2=\bkap_4=\begin{bmatrix}1&0\\0&\lambda^{-1}\end{bmatrix},\qquad\lambda\geq1,
\end{equation}
where $\lambda$ simultaneously controls the local anisotropy within each subdomain and the global heterogeneity of the medium; the contrast is $\kappa_{\max}/\kappa_{\min}=\lambda^2$. We prescribe the smooth manufactured solution $u=\sin(\pi x)\sin(\pi y)$, with $f=-\dvg(\bkap\grd u)$ computed per subdomain; the meshes conform to the subdomain partition, so that the discontinuities of $\bkap$ align with element interfaces. Since $\partial_x u$ vanishes on $x=\tfrac12$ and $\partial_y u$ on $y=\tfrac12$, the normal flux is continuous across both interfaces for every $\lambda$, so $u$ is the exact weak solution. The smooth primal solution satisfies the regularity required by Theorem~\ref{thm:nrj-estimate}, including at the cross-point. This does not imply piecewise $H^2$ regularity for the adjoint with a general right-hand side: condition~\eqref{elliptic_regularity} is not established for this checkerboard. The energy estimate applies, while the $\LL{2}$ convergence study is empirical.

\begin{figure}[!ht]
\centering
\begin{tikzpicture}[scale=0.5]
  \draw (0,0) rectangle (10,10);
  \draw[dashed] (0,5) -- (10,5);
  \draw[dashed] (5,0) -- (5,10);
  \draw (0,0) node[above right] {$\domain_1$};
  \draw (10,0) node[above left] {$\domain_2$};
  \draw (10,10) node[below left] {$\domain_3$};
  \draw (0,10) node[below right] {$\domain_4$};
  \draw[<->] (0.5,2.5) -- (4.5,2.5);   \draw[<->] (2.5,1.5) -- (2.5,3.5);   \draw (2.5,2.5) node[below right] {$\bkap_{1}$};
  \draw[<->] (6.5,2.5) -- (8.5,2.5);   \draw[<->] (7.5,2.1) -- (7.5,2.9);   \draw (7.5,2.5) node[below right] {$\bkap_{2}$};
  \draw[<->] (5.5,7.5) -- (9.5,7.5);   \draw[<->] (7.5,6.5) -- (7.5,8.5);   \draw (7.5,7.5) node[below right] {$\bkap_{3}$};
  \draw[<->] (1.5,7.5) -- (3.5,7.5);   \draw[<->] (2.5,7.1) -- (2.5,7.9);   \draw (2.5,7.5) node[below right] {$\bkap_{4}$};
  \draw (5,0) node[below] {(a)};
\end{tikzpicture}\hspace{1cm}
\begin{tikzpicture}[scale=0.5]
  \draw (0,0) rectangle (10,10);
  \draw[dashed] (0,5) -- (10,5);
  \draw[dashed] (5,0) -- (5,10);
  \draw (0,0) node[above right] {$\domain_1$};
  \draw (10,0) node[above left] {$\domain_2$};
  \draw (10,10) node[below left] {$\domain_3$};
  \draw (0,10) node[below right] {$\domain_4$};
  \draw[<->] (1,2.5) -- (4,2.5);   \draw[<->] (2.5,1) -- (2.5,4);   \draw (2.5,2.5) node[below right] {$\bkap_{1}$};
  \draw[<->] (7,2.5) -- (8,2.5);   \draw[<->] (7.5,2) -- (7.5,3);   \draw (7.5,2.5) node[below right] {$\bkap_{2}$};
  \draw[<->] (6,7.5) -- (9,7.5);   \draw[<->] (7.5,6) -- (7.5,9);   \draw (7.5,7.5) node[below right] {$\bkap_{3}$};
  \draw[<->] (2,7.5) -- (3,7.5);   \draw[<->] (2.5,7) -- (2.5,8);   \draw (2.5,7.5) node[below right] {$\bkap_{4}$};
  \draw (5,0) node[below] {(b)};
\end{tikzpicture}
\caption{The four-subdomain partition $\domain=\bigcup_{i=1}^{4}\domain_i$ shared by both tests; in each subdomain the tensor $\bkap_i$ is sketched through its principal directions, the length of each arrow encoding the relative magnitude of the corresponding eigenvalue (in panel (a), $\lambda$, $1$ or $\lambda^{-1}$). (a) Test~1 (Section~\ref{S:5.1}): combined anisotropy and heterogeneity with a smooth solution. (b) Test~2 (Section~\ref{S:5.2}, Kellogg): isotropic but heterogeneous coefficients with a singular solution.}
\label{fig:test1-geometry}
\end{figure}

\paragraph{Convergence}
Figure~\ref{fig:test1-conv} reports the convergence history for $k=1,\dots,4$ at $\lambda=10^{3}$ (contrast $10^{6}$), in the energy norm (left) and the $\LL{2}$-norm (right). The energy curves approach the order $\bigO{h^k}$ predicted by Theorem~\ref{thm:nrj-estimate}. In the energy norm the three variants S-, N- and I-IPN are numerically indistinguishable, their errors agreeing to within $0.25\%$ across the whole sweep. Thus no appreciable dependence of the energy error on $\epsilon$ is observed on this benchmark. The $\LL{2}$ curves are compatible with order $k+1$ for all three variants, with some pre-asymptotic slopes exceeding this value; S-IPN has the smallest errors. These observations are not consequences of Theorem~\ref{thm:L2-estimates}, whose adjoint regularity assumption has not been verified for this medium. Under that assumption, the theorem would guarantee order $k+1$ for S-IPN and $k+1/2$ for the two other variants.

\begin{figure}[!ht]
\centering
\includegraphics[width=0.49\textwidth]{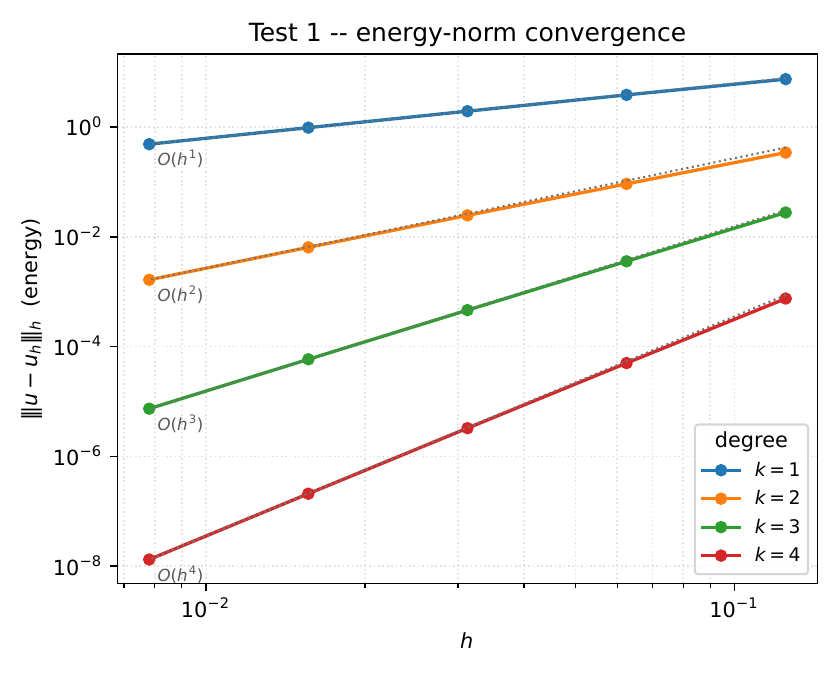}\hfill
\includegraphics[width=0.49\textwidth]{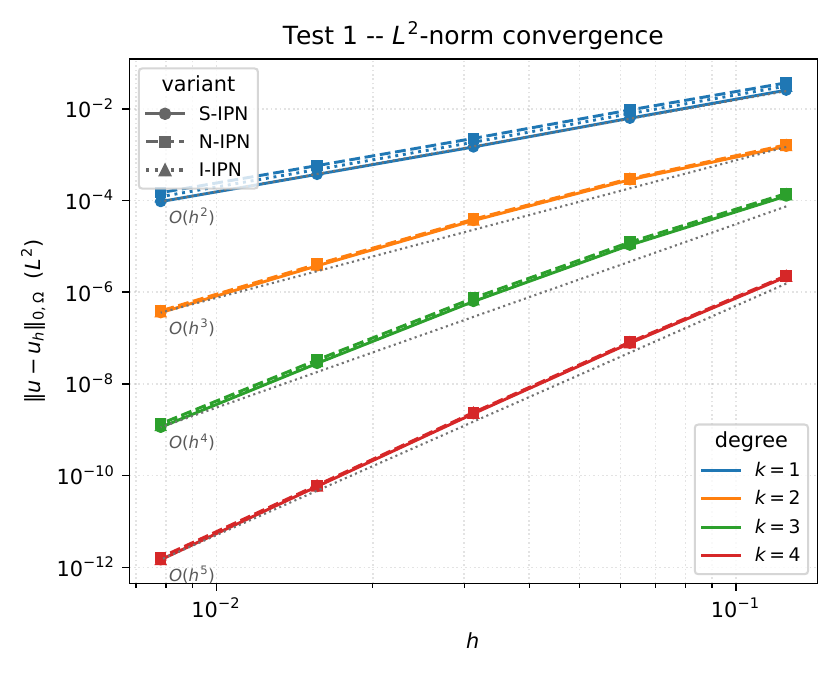}
\caption{Test~1 (anisotropic checkerboard, $\lambda=10^{3}$) --- convergence of the energy error (left) and the $\LL{2}$-error (right) versus $h$, for $k=1,\dots,4$. The three variants $\{$S,N,I$\}$-IPN coincide in the energy norm; in the $\LL{2}$-norm the curves are compatible with order $k+1$, with the smallest errors for S-IPN.}
\label{fig:test1-conv}
\end{figure}

\paragraph{Contrast robustness}
Fixing $h=1/64$ and $k=2$, we let $\lambda$ range over twelve orders of contrast. Figure~\ref{fig:test1-robust} reports the relative energy error $\tnorm{u-\du}_{h}/\norm{\bkap^{\ohalf}\grd u}{0}{\domain}$ of the three variants: it stays within $[1.5,\,2.4]\times10^{-4}$ across $\lambda\in[1,10^6]$ (contrast up to $10^{12}$), i.e.\ essentially flat---and even mildly \emph{decreasing} with the contrast. The three curves are moreover indistinguishable (within $0.09\%$), so the robustness is shared by S-, N- and I-IPN alike. For this family of smooth solutions, the measured relative accuracy is therefore insensitive to the contrast over the tested range; the discrete solution remains clean and oscillation-free there (Figure~\ref{fig:solutions}(a)).

\begin{figure}[!ht]
\centering
\includegraphics[width=0.62\textwidth]{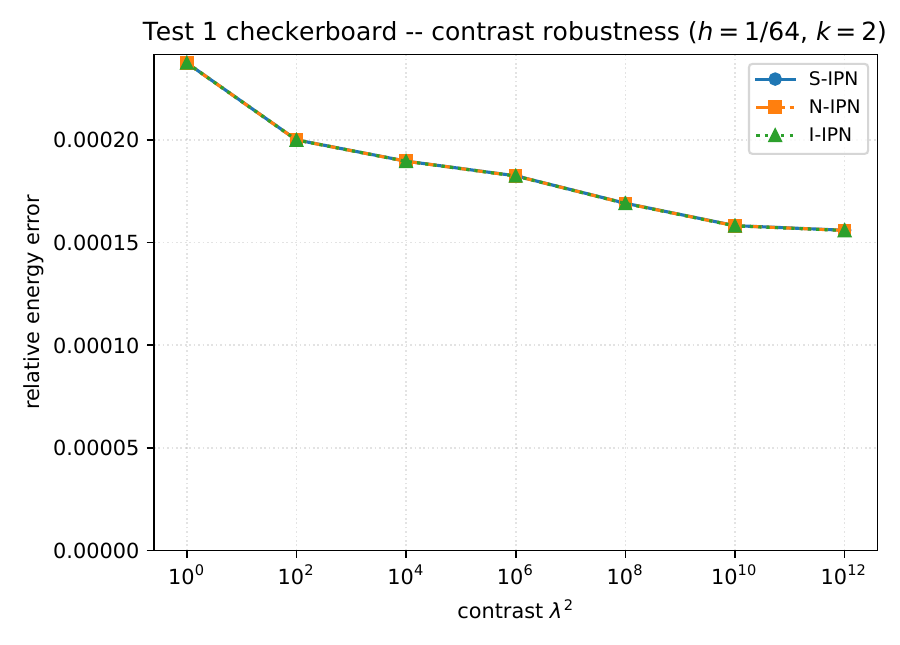}
\caption{Test~1 (anisotropic checkerboard) --- relative energy error versus the contrast $\lambda^2$ ($h=1/64$, $k=2$) for the three variants: the error remains flat over twelve orders of magnitude, identically for S-, N- and I-IPN, including in the presence of the checkerboard cross-point.}
\label{fig:test1-robust}
\end{figure}

\begin{figure}[!ht]
\centering
\includegraphics[width=0.94\textwidth]{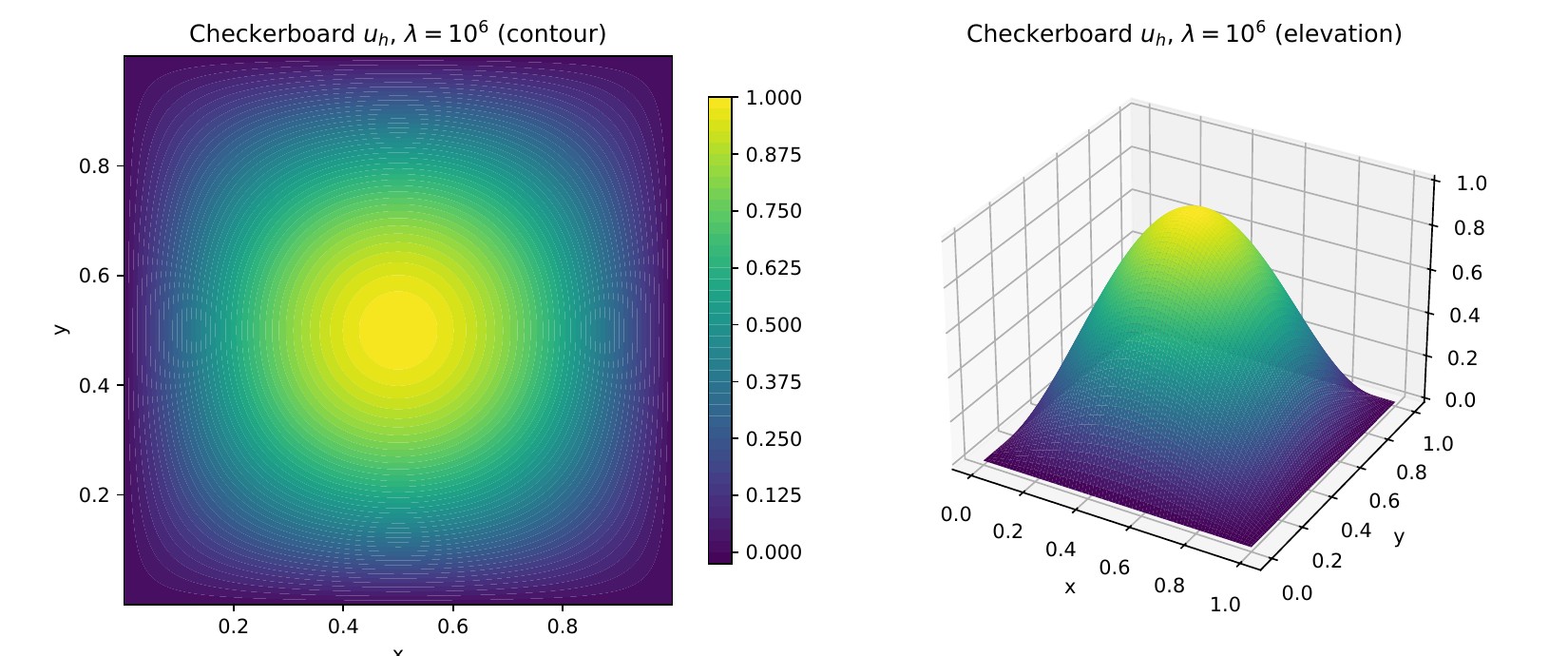}\\(a)

\medskip
\includegraphics[width=0.94\textwidth]{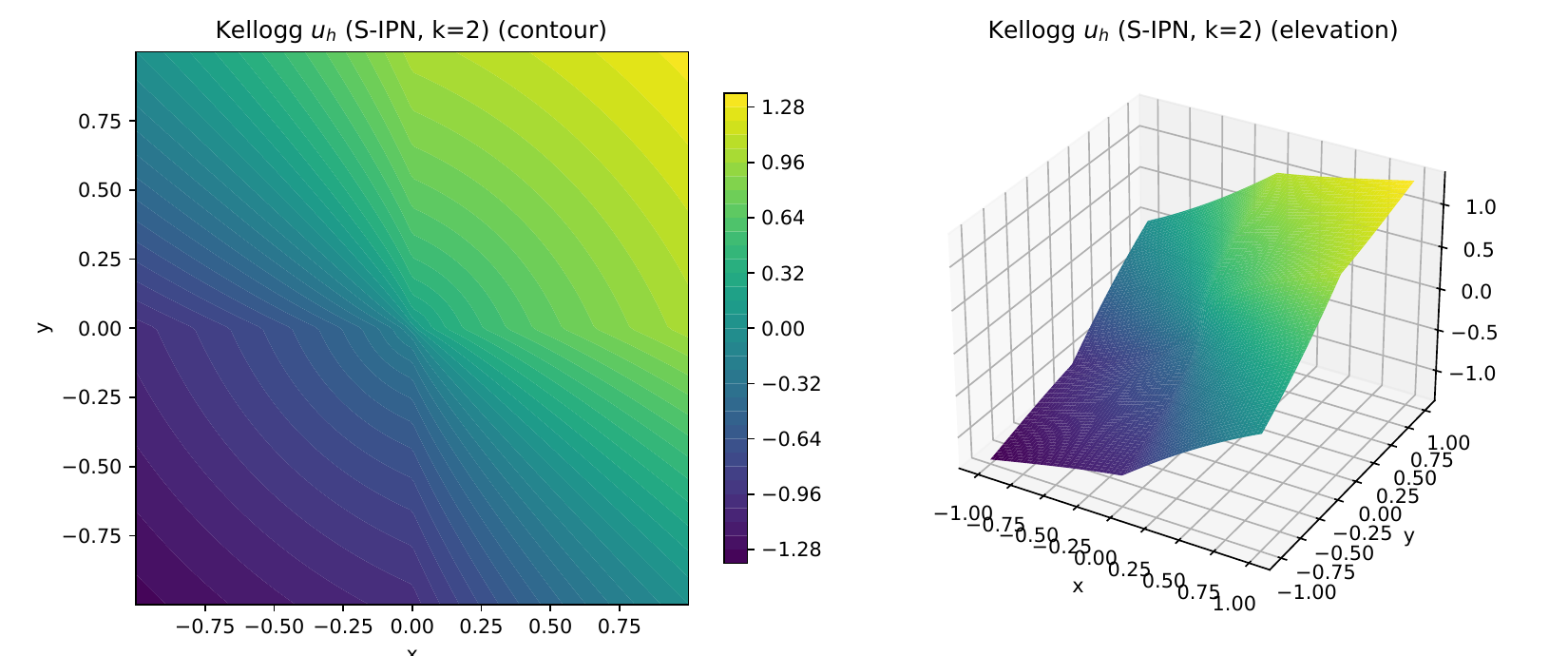}\\(b)
\caption{Discrete solutions $\du$ (S-IPN) on the four-subdomain media. (a) Test~1 at $\lambda=10^{6}$: clean and oscillation-free, including at the cross-point $(\tfrac12,\tfrac12)$. (b) Test~2 (Kellogg): the point singularity at the origin.}
\label{fig:solutions}
\end{figure}

\subsection{Test 2: heterogeneous diffusion with a singular solution (Kellogg)}
\label{S:5.2}

We turn to the Kellogg benchmark~\cite{Kellogg1975,Riviere08bookDG}, which probes the low-regularity regime. On $\domain=(-1,1)^2$, partitioned into the same four subdomains (Figure~\ref{fig:test1-geometry}(b)) with isotropic tensor $\bkap\vert_{\domain_i}=\kappa_i\vect I$ ($\kappa_1=\kappa_3=5$, $\kappa_2=\kappa_4=1$), the exact solution $u(r,\vartheta)=r^{\gamma}\mu(\vartheta)$ has piecewise regularity $H^{1+s}$ for every $s<\gamma$, but not at $s=\gamma$, where $\gamma\approx0.5354$~\cite{Kellogg1975} and exhibits a point singularity at the origin (Figure~\ref{fig:solutions}(b)), enforced through inhomogeneous Dirichlet data (Remark~\ref{rmk:inhomogeneous}). The convergence is then governed by the regularity of $u$ rather than by the polynomial degree, and the energy estimate gives $\bigO{h^s}$ for every fixed $1/2<s<\gamma$ and every $k\geq1$, with the weak boundary treatment of Remark~\ref{rmk:inhomogeneous}. It does not assert the endpoint rate $h^\gamma$. The measured energy rate is nevertheless close to $\gamma$: Table~\ref{tab:test2} and Figure~\ref{fig:test2} show the representative degree $k=2$. The archived sweep over $k=1,\dots,4$ exhibits the same limiting slope for the three variants, with higher degrees lowering the error constant. As in Test~1, the variants coincide in energy and share the $\LL{2}$ rate, the symmetric S-IPN having the smallest $\LL{2}$ constant; we therefore report the IPN method, the values being those of S-IPN. The observed $\LL{2}$-rate, $\approx1.1$, is about $2\gamma$: the adjoint operator has the same interface singular exponents as the primal one, so this empirical rate is consistent with a regularity-limited duality gain (Remark~\ref{rmk:dual-regularity}) and the rate~\eqref{L2-estimate_rate} does not apply, \eqref{elliptic_regularity} being violated here.

\begin{table}[!ht]
\centering
\caption{Test~2 (Kellogg, $\gamma\approx0.5354$), degree $k=2$: energy and $\LL{2}$ errors and observed rates of the IPN method (values for the representative S-IPN). The energy rate at this degree is close to $\gamma$ for each variant $\epsilon\in\{0,\pm1\}$; the three variants coincide in energy and share the $\LL{2}$ rate (Figure~\ref{fig:test2}).}
\label{tab:test2}
\begin{tabular}{c c c c c}
\hline
$1/h$ & $\tnorm{u-\du}_{h}$ & rate & $\norm{u-\du}{0}{\domain}$ & rate\\
\hline
$8$ & 2.63e-01 & --- & 6.46e-03 & ---\\
$16$ & 1.81e-01 & 0.54 & 2.96e-03 & 1.12\\
$32$ & 1.25e-01 & 0.54 & 1.38e-03 & 1.10\\
$64$ & 8.60e-02 & 0.54 & 6.52e-04 & 1.09\\
\hline
\end{tabular}
\end{table}

\begin{figure}[!ht]
\centering
\includegraphics[width=0.6\textwidth]{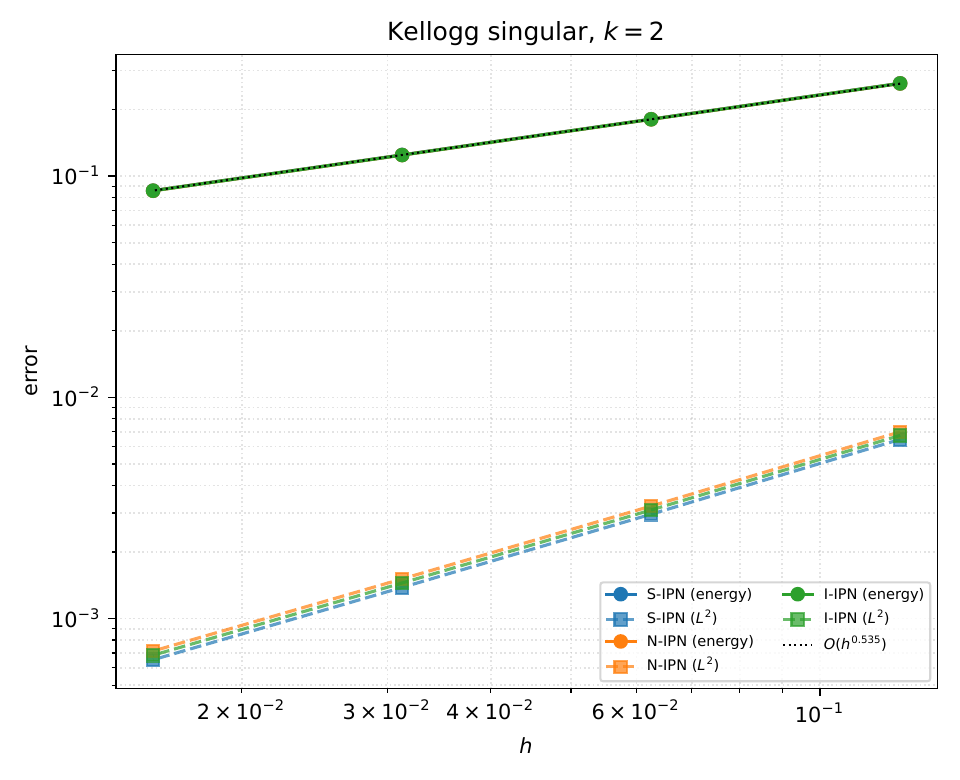}
\caption{Test~2 (Kellogg): convergence history at $k=2$ for the three variants $\{$S,N,I$\}$-IPN, exhibiting an observed energy slope close to $\gamma$; the variants coincide in energy.}
\label{fig:test2}
\end{figure}

\subsection{The conforming variant: convergence and the role of the transmissibility weighting}
\label{S:5.3}

This section is set in the conforming space~\eqref{CCIP_formulation}, where $\eta_{h}$ vanishes and the interior penalty $s_h=-\dualprod{\varrho\jump{\bkap\grdh\du}}{\jump{\bkap\grdh\dv}}{0}{\Fhi}$ is the only remaining mechanism; its penalized variant $\epsilon=-1$ is a transmissibility-weighted CIP method (Proposition~\ref{prop:conforming}). The boundary treatment can therefore play no part; moreover the mean weights $\vomega$ do not appear in $a^{\mathfrak c}_h$, so $\varrho$ is the only trace left by $\tau$ in the method, and a \emph{controlled} comparison isolates it exactly: the \emph{same} penalty term, with either the \emph{transmissibility} weight built on~\eqref{penalty_term}, $\tau_i\eqbydef\alpha_0\C{T}^2\,\kappa_{i,\f{}}\abs{\f{}}/\abs{\e{i}}$, or a \emph{contrast-blind} one obtained by removing the diffusion from $\tau$, $\tau^{\mathrm{cb}}_i\eqbydef\alpha_0\C{T}^2\abs{\f{}}/\abs{\e{i}}$, that is,
\begin{equation*}
\varrho^{\mathrm{tr}}=\frac{1}{\tau_1+\tau_2},\qquad
\varrho^{\mathrm{cb}}=\frac{1}{\tau^{\mathrm{cb}}_1+\tau^{\mathrm{cb}}_2}=\kappa_{\f{}}\,\varrho^{\mathrm{tr}},\qquad
\kappa_{\f{}}\eqbydef\frac{\kappa_{1,\f{}}/\abs{\e{1}}+\kappa_{2,\f{}}/\abs{\e{2}}}{1/\abs{\e{1}}+1/\abs{\e{2}}}.
\end{equation*}
What the comparison varies is therefore the \emph{scale} of $\varrho$. The jump penalized by $s_h$ is that of the flux $\bkap\grdh\du$, so $\varrho$ must carry $\kappa^{-1}$ for $s_h$ to have the dimension of the energy. Classical CIP methods typically penalize derivative jumps~\cite{DouglasCIP1976,BurmanHansbo2004}; here both branches penalize the same \emph{flux} jump. Thus the comparison is an ablation of the diffusion scaling within the IPN flux-jump term, not a comparison with a complete classical CIP method. The contrast-blind branch multiplies that penalty by $\kappa_{\f{}}$ relative to the transmissibility branch, and increases it where $\kappa_{\f{}}>1$. All else is fixed ($k=2$, checkerboard).

The transmissibility branch is the penalized conforming variant itself: at $\lambda=10^{3}$, refinement from $h=1/64$ to $h=1/128$ gives the observed orders $1.96$ in the energy norm and $3.39$ in the $\LL{2}$-norm, showing energy convergence close to order two and a pre-asymptotic $\LL{2}$ slope above three. As in Section~\ref{S:5.1}, the $\LL{2}$ observation is empirical: Remark~\ref{rmk:L2-conforming} requires the adjoint regularity assumption, which is not established on this checkerboard.

Table~\ref{tab:weighting} shows that the energy-error ratio approaches one at the lowest tested contrast, whereas it increases over the available refinement range at the two higher contrasts. At contrast $10^8$, the contrast-blind weighting produces an energy error $2.56$ times larger on the finest mesh. The transmissibility weighting maintains observed energy orders between $1.93$ and $2.00$, while the contrast-blind branch has not reached order two at the highest contrast.

These results indicate a contrast-dependent delay in attaining the expected energy convergence with the contrast-blind weighting. Both stabilizations are exactly consistent (Remark~\ref{rmk:exact-consistency}), but this concerns the exact solution and does not eliminate their action on the discrete solution. The comparison isolates the interior flux-jump mechanism; it does not by itself explain the robustness of the complete IPN formulation, particularly I-IPN, whose interior stabilization is absent.

\begin{table}[!ht]
\centering
\caption{Effect of the interior-penalty weighting (checkerboard, $k=2$, penalized variant): ratio of the energy error of the contrast-blind flux-jump weight to that of the transmissibility weight, versus the contrast $\lambda^2$ and the mesh size. The ratio decreases towards one for $\lambda=10^2$ and increases for $\lambda=10^3$ and $10^4$; no turnover is observed for these latter two contrasts on the available meshes.}
\label{tab:weighting}
\begin{tabular}{c c c c c c}
\hline
$\lambda$ & contrast $\lambda^2$ & $h=1/32$ & $1/64$ & $1/128$ & $1/256$\\
\hline
$10^{2}$ & $10^{4}$ & 1.24 & 1.19 & 1.12 & 1.06\\
$10^{3}$ & $10^{6}$ & 1.38 & 1.91 & 2.53 & 2.67\\
$10^{4}$ & $10^{8}$ & 1.13 & 1.20 & 1.57 & 2.56\\
\hline
\end{tabular}
\end{table}

\section{Conclusion}

We have constructed and analyzed an Interior Penalty Nitsche (IPN) method for diffusion with heterogeneous, anisotropic coefficients. The method is obtained by algebraically condensing a stabilized Lagrange multiplier defined on the whole mesh skeleton, which yields an $H^1$-conforming formulation combining a Nitsche boundary penalty with a transmissibility-weighted interior penalty on the jump of the normal diffusive flux. We established the equivalence between the hybridized and condensed formulations, which delivers the well-posedness of the hybridized problem as a by-product, and we proved discrete coercivity, boundedness, and \textit{a priori} error estimates of the scalar variable in the energy- and $\LL{2}$-norms for the three symmetry variants. The continuity of the numerical normal flux across interfaces is available in closed form and inherited from the hybridization strategy (Remark~\ref{rmk:conservation}). The energy approximation estimate uses elementwise interpolation and requires no quasi-monotonicity of the diffusion coefficients. The benchmarks exhibit the expected energy convergence and stable relative accuracy over the tested contrasts. Their $\LL{2}$ results complement the theory empirically, since the piecewise adjoint regularity required by the $\LL{2}$ estimate is not established for the smooth checkerboard and fails for the Kellogg configuration.

The strongly imposed boundary variant exposes the CG / stabilized / sign-reversed trichotomy analyzed above. Its controlled numerical comparison isolates the role of transmissibility weighting: over the tested meshes and contrasts, removing the diffusion scaling delays the observed energy convergence, while the transmissibility weighting retains rates close to the optimal order.

A direction for future work is to use the single-valued normal trace $\dtsg$ as a starting point for an $\boldsymbol{\mathrm{H}}(\mathrm{div})$-conforming, locally conservative flux reconstruction and a guaranteed Prager--Synge a posteriori estimator. Local equilibration and the robustness of such an estimator require additional analysis. The extension of the IPN construction to linear elasticity is developed in a separate companion manuscript, which treats matrix-valued stabilization and the transmission of numerical tractions.

\section*{Reproducibility}
The code and reference data accompanying this article are available on Zenodo at \url{https://doi.org/10.5281/zenodo.23524954}. The archive includes the Python/NGSolve implementations, benchmark drivers, figure-generation scripts, and instructions for reproducing the numerical results.

\section*{Declaration of competing interest}
The author declares no competing interests.

\bibliography{ipn_ref}

\end{document}